\newcommand*{\mailto}[1]{\href{mailto:#1}{\nolinkurl{#1}}}
\newcommand{\arxiv}[1]{\href{http://arxiv.org/abs/#1}{arXiv:#1}}
\newtheorem{theorem}{Theorem}[section]
\newtheorem{definition}[theorem]{Definition}
\newtheorem{lemma}[theorem]{Lemma}
\newtheorem{proposition}[theorem]{Proposition}
\newtheorem{corollary}[theorem]{Corollary}
\newtheorem{remark}[theorem]{Remark}
\newcommand{\R}{{\mathbb R}}
\newcommand{\Z}{{\mathbb Z}}
\newcommand{\C}{{\mathbb C}}
\newcommand{\D}{{\mathcal D}}
\newcommand{\cR}{{\mathcal R}}
\newcommand{\bl}{{\bf l}}
 \newcommand{\br}{|\kern-.25em|\kern-.25em|}
 \newcommand{\floor}[1]{\lfloor#1 \rfloor}
\newcommand{\ceil}[1]{\lceil#1 \rceil}
\newcommand{\I}{\mathrm{i}}
\newcommand{\E}{\mathrm{e}}
\DeclareMathOperator{\re}{Re}
\DeclareMathOperator{\im}{Im}
\newcommand{\ve}{\varepsilon}
\numberwithin{equation}{section}
\begin{document}

\title[Dispersion Estimates for Discrete Dirac Equations]{Dispersion Estimates for One-Dimensional Discrete Dirac Equations}

\author[E.\ Kopylova]{Elena Kopylova}
\address{Faculty of Mathematics\\ University of Vienna\\
Oskar-Morgenstern-Platz 1\\ 1090 Wien\\ Austria\\ and Institute for Information Transmission Problems\\ Russian Academy of Sciences}
\email{\mailto{Elena.Kopylova@univie.ac.at}}
\urladdr{\url{http://www.mat.univie.ac.at/~ek/}}

\author[G.\ Teschl]{Gerald Teschl}
\address{Faculty of Mathematics\\ University of Vienna\\
Oskar-Morgenstern-Platz 1\\ 1090 Wien\\ Austria\\ and International Erwin Schr\"odinger
Institute for Mathematical Physics\\ Boltzmanngasse 9\\ 1090 Wien\\ Austria}
\email{\mailto{Gerald.Teschl@univie.ac.at}}
\urladdr{\url{http://www.mat.univie.ac.at/~gerald/}}

\thanks{J. Math. Anal. Appl. {\bf 434}, 191--208 (2016)}
\thanks{{\it Research supported by the Austrian Science Fund (FWF) under Grant No.\ P27492-N25.}}

\keywords{Discrete Dirac equation, Cauchy problem, dispersive decay, limiting absorption principle}
\subjclass[2010]{Primary 35Q41, 81Q15; Secondary 39A12, 39A70}

\begin{abstract}
We derive dispersion estimates for solutions of the one-dimensional discrete perturbed Dirac equation.
To this end,  we develop basic scattering theory and establish a limiting absorption principle for
discrete perturbed Dirac operators.
\end{abstract}

\maketitle

\section{Introduction}

In the present paper, we are concerned with the one-dimensional discrete Dirac equation
\begin{equation} \label{Dir}
 \I\dot{\bf w}(t):=\D{\bf w}(t)=(\D_0+Q){\bf w}(t),
\quad {\bf w}_n=(u_n,v_n)\in\C^2,\quad n\in\Z,
\end{equation}
where the unperturbed discrete Dirac operator $\D_0$ is given by
\[
\D_0=\begin{pmatrix}
m   & d\\
d^* & -m
\end{pmatrix},\quad m>0,
\]
with $(du)_n=u_n-u_{n+1}$.
We suppose that the matrix potential $Q=(q^{ij})_{i,j=1,2}$ is real-valued
and satisfies the conditions:
\begin{equation}\label{q-con}
q_n^{12} \equiv q_n^{21} \quad
\text{and}\quad q_n^{21}\not =-1, \quad \forall n\in\Z.
\end{equation}
To formulate our results, we introduce the weighted spaces 
$\ell^p_{\sigma}=\ell^p_{\sigma}(\Z)$,
$\sigma\in\R$,  associated with the norm
\begin{equation*}
\Vert u\Vert_{\ell^p_{\sigma}}= \begin{cases} 
\left( \sum_{n\in\Z} (1+|n|)^{p\sigma} |u(n)|^p\right)^{1/p}, & \quad p\in[1,\infty),\\
\sup_{n\in\Z} (1+|n|)^{\sigma} |u(n)|, & \quad p=\infty, \end{cases}
\end{equation*}
and the case $\sigma=0$ corresponds to the usual $\ell^p_0=\ell^p$ spaces without weight.
We also set $\bl^p_{\sigma}=\ell^p_{\sigma}\oplus \ell^p_{\sigma}$.

Under the assumption $q^{ij}\in\ell^1_2$ the spectrum of $\D$ consists of a purely absolutely continuous part
covering $\overline\Gamma$, where $\Gamma=(-\sqrt{m^2+4},-m)\cup (m,\sqrt{m^2+4})$, 
plus a finite number of eigenvalues located in $\R\setminus\overline\Gamma$.
In addition, there could be resonances at the boundaries of the continuous spectrum, that is, there could
be a corresponding bounded solution of the underlying difference equation at these energies. 

As our first main result, we will prove the following $\bl^1\to \bl^\infty$ decay
\begin{equation}\label{fullp}
\Vert \E^{-\I t\D}P_{c}\Vert_{\bl^1\to \bl^\infty}=\mathcal{O}(t^{-1/3}),\quad t\to\infty,
\end{equation}
under the assumptions $q^{ij}\in\ell^1_2$  
in the non-resonant case and $q^{ij}\in\ell^1_3$ in the resonant case.
Here $P_{c}$ is the orthogonal projection in $\bl^2$ onto the
continuous spectrum of $\D$.

In combination with conservation of the $\bl^2$ norm
\eqref{fullp} also gives rise to the usual interpolation and Strichartz estimates.
Moreover, \eqref{fullp} implies
\[
\Vert \E^{-\I t\D}P_{c}\Vert_{\bl^2_\sigma\to \bl^2_{-\sigma}}=\mathcal{O}(t^{-1/3}),\quad t\to\infty, \quad \sigma>1/2.
\]
However, we will in fact establish the stronger result
\begin{equation}\label{as1-new}
\Vert \E^{-\I t\D} P_c\Vert_{\bl^2_\sigma\to \bl^2_{-\sigma}}=\mathcal{O}(t^{-1/2}),\quad t\to\infty,\quad\sigma>1/2.
\end{equation}
For the remaining results we restrict ourselves to the case when
the edges of the spectrum $\omega=\pm m,\pm\sqrt{4+m^2}$ are
no resonances for $\D$. Then for $q^{ij}\in\ell^1_3$, $i,j=0,1$, we show that
\begin{equation}\label{as-new}
\Vert \E^{-\I t\D}P_c\Vert_{\bl^1_{3/2}\to \bl^\infty_{-3/2}}=\mathcal{O}(t^{-4/3}),
\quad t\to\infty.
\end{equation}
Moreover, for $q^{ij}\in\ell^1_3$ in  the non-resonant case, we prove
\begin{equation}\label{as2-new}
\Vert \E^{-\I t\D} P_c\Vert_{\bl^2_\sigma\to \bl^2_{-\sigma}}=\mathcal{O}(t^{-3/2}),\quad t\to\infty,\quad\sigma>2.
\end{equation} 

To establish the estimates \eqref{fullp}--\eqref{as2-new},
we represent the resolvent in terms of Jost functions.
In particular,  we use this representation to prove the limiting absorption principle 
and to establish absence of embedded eigenvalues in the continuous spectrum
under the condition $q^{ij}\in\ell^1$.

The dispersive decay for $L^1\to L^\infty$ with decay rate $t^{-1/2}$ for continuous perturbed Schr\"odinger equations
has been established by Weder \cite{wed} with later improvements by Goldberg and Schlag \cite{GS}, and by
Egorova, Marchenko, and us in \cite{EKMT}.

The dispersive decay of type \eqref{fullp} for discrete Schr\"odinger equation
has been established by Pelinovsky and Stefanov \cite{PS}
under the assumption that there are no resonances and under more restrictive conditions
on the potential. Further developments were given by Cuccagna and Tarulli \cite{CT}.
In our recent paper \cite{EKT} we weakened the conditions of \cite{CT} in the resonant case.
Moreover, in \cite{EKT} and \cite{EKMT} we obtain the analogous decay for discrete
and continuous Klein--Gordon equations.
For the discrete Dirac equation \eqref{Dir} the decay \eqref{fullp} has not been obtain previously.

The decay of type \eqref{as2-new} in weighted norms for the one-dimensional continuous Schr\"odinger  equation in the non-resonant case
has been established by Jensen and Nenciu \cite{JN} and, for more general PDEs of Schr\"odinger type, by Murata \cite{M}.
For the one-dimensional Klein--Gordon equation the analogous decay in weighted energy norms
has been obtained by Komech and one of us \cite{KK} (see also the survey \cite{K10}). For discrete Schr\"odin\-ger and Klein--Gordon equations with compactly
supported potentials it has been obtained in \cite{kkk} and generalized in \cite{PS} to discrete Schr\"odinger
equation with non-compactly supported potentials $q$ under the decay condition 
$|q_n|\le (1+|n|)^{-\beta}$ with $\beta>5$ for $\sigma>5/2$.
In \cite{EKT},  we improved this result to $q\in\ell^1_2$ and $\sigma>3/2$.

For the continuous one-dimensional Dirac equation the dispersive decay of type \eqref{as2-new} with
$\sigma>5/2$  has been obtained in \cite{K11}. 
For the discrete Dirac equation the decay \eqref{as2-new} is again new.

Asymptotics of the type \eqref{fullp}--\eqref{as2-new} can be applied in proving asymptotic
stability of solitons for the associated discrete one-dimensional nonlinear Dirac equations. 

\section{The free discrete Dirac equation}
\label{free-sect}

First we consider the free equation \eqref{Dir} with $Q=0$.
We have
\begin{equation}\label{Rid}
(\D_0-\lambda)(\D_0+\lambda)=
-\Delta_L+m^2-\lambda^2,
\end{equation}
where $\Delta_L$ is the discrete Laplacian given by
\begin{equation*}
  (\Delta_L u)_n=u_{n+1}-2u_n+u_{n-1}, \quad n\in\Z.
\end{equation*}
Denote by ${\cR}_0(\lambda)=({\D}_0-\lambda)^{-1}$
the resolvent of the  free Dirac operator $\D_0$.
Then \eqref{Rid} implies
\begin{equation}\label{RD}
{\cR}_0(\lambda)=(\D_0+\lambda)R_{0}(\lambda^2-m^2),
\end{equation}
where $R_{0}(\omega)=(-\Delta_L-\omega)^{-1}$ 
is the resolvent of  operator $-\Delta_L$.
Adopting the notation $[K]_{n,k}$ for the kernel of an operator $K$, that is,
\[
(K u)_n=\sum_{k\in\Z} [K]_{n,k}u_k,\quad n\in\Z,
\]
the kernel of the resolvent $R_0(\omega)$ is given by  (see \cite{kkk,tjac})
\begin{equation} \label{R02}
[R_0(\omega)]_{n,k} =\frac 1{2\pi}\int\limits_{\mathbb{T}}
\frac{\E^{-\I\theta(n-k)}}{\phi(\theta)  -\omega} d\theta
= \frac{\E^{-\I\theta(\omega)|n-k|}}{2\I\sin\theta(\omega)},
\quad\omega\in\C\setminus [0,4],
\end{equation}
$n,k\in\Z$. Here $\theta(\omega)$ is the unique solution of the equation
\begin{equation}\label{theta}
  2-2\cos\theta=\omega,\quad\theta\in \Sigma:=\{ -\pi\le\re\theta\le\pi,\;\im\theta< 0\}/2\pi\Z.
\end{equation}
Observe that $\theta\mapsto\omega=2-2\cos\omega$ is a biholomorphic map from $\Sigma\to\C\setminus [0,4]$.

 The kernel of the free Dirac propagator  can be easily
computed using the spectral theorem and formulas \eqref{RD}--\eqref{R02}
\begin{align*}
[\E^{-\I t\D_0}]_{n,k}&=\frac 1{2\pi\I}\int\limits_{\Gamma}
\E^{-\I t\lambda}[\cR_0(\lambda+\I0)-\cR_0(\lambda-\I0)]_{n,k}\,d\lambda\\ 
&=-\frac 1{4\pi}\int\limits_{\Gamma} \E^{-\I t\lambda}
\begin{pmatrix}
m+\lambda   & d\\
d^* & -m+\lambda
\end{pmatrix}
\Big(\frac{\E^{-\I\theta_+|n-k|}}{\sin\theta_+}-
\frac{\E^{-\I\theta_-|n-k|}}{\sin\theta_-}\Big)d\lambda,
\end{align*}
where 
\begin{equation}\label{theta_pm}
\theta_+=\theta((\lambda+\I 0)^2-m^2)\in [-\pi,0],
\quad\theta_-=\theta((\lambda-\I 0)^2-m^2)\in [0,\pi],\quad\lambda\in\overline\Gamma.
\end{equation}
Finally, 
\begin{equation}\label{Om}
[\E^{-\I t\D_0}]_{n,k}=\frac 1{2\pi}\sum\limits_{j=-1}^1 \int_{-\pi}^{\pi}
\frac{\Omega_j(\theta)}{g(\theta)}\,
\E^{-\I tg(\theta)}\, \E^{-\I\theta|n-k+j|}d\theta,
\end{equation}
where $g(\theta):=\sqrt{2-2\cos\theta+m^2}$ and
\[
\Omega_{-1}(\theta)=\left(\!\!\begin{array}{cc}
0  & 0\\
-1 & 0
\end{array} \!\!\right)\!,\quad\Omega_{1}(\theta)=\left(\!\!\begin{array}{cc}
0  & -1\\
0 & 0
\end{array} \!\!\right)\!,\quad\Omega_0(\theta)=\left(\!\!\begin{array}{cc}
m+g(\theta)  & 1\\
1 & -m+g(\theta)
\end{array} \!\!\right).
\]
For the free discrete Dirac equation the $\bl^1\to \bl^{\infty}$ decay of type \eqref{fullp}
holds. However,  the $\bl^2_{\sigma}\to \bl^2_{-\sigma}$ decay
holds with rate  $t^{-1/2}$ only (as in the  continuous case).
This is caused by the presence of resonances at the edge points $\mu=\pm m,\pm\sqrt{m^2+4}$.

\begin{proposition}\label{free-as}
The following asymptotics hold:
\begin{equation}\label{D0-as1}
\Vert \E^{-\I t\D_0}\Vert_{\bl^1\to \bl^{\infty}}=\mathcal{O}(t^{-1/3}),\quad t\to\infty,
\end{equation}
\begin{equation}\label{D0-as2}
\Vert \E^{-\I t\D_0}\Vert_{\bl^2_\sigma\to \bl^2_{-\sigma}}=\mathcal{O}(t^{-1/2}),
\quad t\to\infty,\quad\sigma>1/2.
\end{equation}

\end{proposition}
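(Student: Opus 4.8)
The plan is to read everything off the explicit kernel \eqref{Om}, treating each summand (matrix entry) as a scalar oscillatory integral
\[
\mathcal{K}_j(t,N)=\int_{-\pi}^{\pi}a_j(\theta)\,\E^{-\I t g(\theta)}\,\E^{-\I\theta|N+j|}\,d\theta,\qquad N=n-k,
\]
with smooth bounded amplitude $a_j=\Omega_j/g$ (which has bounded variation on $[-\pi,\pi]$ even for $j=0$, where the entries depend on $g(\theta)$). The phase is $t\,g(\theta)+|N+j|\theta$, so writing $p=|N+j|/t$ it is $t(g(\theta)+p\theta)$ up to sign. The first step is to record the derivatives of $g(\theta)=\sqrt{2-2\cos\theta+m^2}$: one has $g'(\theta)=\sin\theta/g(\theta)$ and $g''(\theta)=\bigl(-\cos^2\theta+(2+m^2)\cos\theta-1\bigr)/g(\theta)^3$. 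The numerator vanishes only when $\cos\theta=c_-:=\tfrac12\bigl(2+m^2-m\sqrt{m^2+4}\bigr)\in(0,1)$, i.e.\ at two interior points $\pm\theta_0$, and a short computation gives $g'''(\pm\theta_0)=\mp\sin\theta_0\,P'(c_-)/g^3\neq0$ since $P'(c_-)=m\sqrt{m^2+4}>0$; at the band edges $\theta=0,\pi$ one has $g''(0)=1/m$ and $g''(\pi)=-(4+m^2)^{-1/2}$, both nonzero. Hence $g''$ has only simple zeros and $\max(|g''|,|g'''|)\ge c_0>0$ uniformly on $[-\pi,\pi]$.

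For \eqref{D0-as1} I would argue as follows. Since the linear term $|N+j|\theta$ does not affect the second and third derivatives of the phase, we have $|\partial_\theta^2(t g+|N+j|\theta)|=t|g''|$ and $|\partial_\theta^3(\cdots)|=t|g'''|$, so the lower bound $\max(|g''|,|g'''|)\ge c_0$ holds \emph{uniformly in $N$}. Partitioning $[-\pi,\pi]$ into a fixed finite number of intervals, on each of which either $|g''|\ge c_0$ or $|g'''|\ge c_0$, the van der Corput lemma (order $2$, resp.\ order $3$) bounds each piece by $C(tc_0)^{-1/2}$, resp.\ $C(tc_0)^{-1/3}$, where $C$ depends only on $c_0$ and the total variation of $a_j$, and in particular not on $N$. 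Taking the worse exponent and summing over $j=-1,0,1$ yields $\sup_{n,k}\|[\E^{-\I t\D_0}]_{n,k}\|=\mathcal{O}(t^{-1/3})$, which is exactly \eqref{D0-as1}.

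For \eqref{D0-as2} the key structural observation is that the only obstruction to the faster rate $t^{-1/2}$ is the fold at $\pm\theta_0$, and that this fold sits at the \emph{extremal group velocity} $p_0:=g'(\theta_0)=\max_{[0,\pi]}g'>0$, since $g'$ rises from $g'(0)=0$ to its maximum at $\theta_0$ and falls back to $g'(\pi)=0$. Thus the $t^{-1/3}$ Airy behaviour is saturated only for $|N|\approx p_0t$, i.e.\ at large separations, whereas the surviving $t^{-1/2}$ rate is produced by the non-degenerate band-edge stationary points at $\theta=0,\pi$ (zero velocity), which are precisely the edge resonances. Concretely I would split the $k$-sum according to $p=|n-k|/t$: for $|p|>p_0+\eps$ the phase is non-stationary, $|\partial_\theta(g+p\theta)|\gtrsim|N|/t$, and repeated integration by parts gives rapid decay in $N$ uniformly in $t$; for $|p|\le p_0-\eps$ the stationary points stay away from $\pm\theta_0$, so $g''$ is bounded below there and stationary phase gives $|\mathcal{K}_j(t,N)|\le Ct^{-1/2}$; in the transition zone $\bigl||p|-p_0\bigr|\le\eps$ one has $|N|\ge(p_0-\eps)t$, hence $\max(|n|,|k|)\gtrsim t$, and the weight $(1+|n|)^{-\sigma}(1+|k|)^{-\sigma}\lesssim t^{-2\sigma}$ more than absorbs the $t^{-1/3}$ bound from the first part.

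The hard part is assembling the non-degenerate zone into a genuine $\bl^2_\sigma\to\bl^2_{-\sigma}$ bound for $\sigma\in(1/2,1]$. Here a crude Schur estimate with the pointwise bound $|\mathcal{K}_j|\le Ct^{-1/2}$ fails, because near zero velocity this bound carries no decay in $N$ and $\sum_k(1+|n|)^{-\sigma}(1+|k|)^{-\sigma}$ diverges for $\sigma\le1$; genuine $L^2$ cancellation must be used. The route I would take is to localize in $\theta$ to a neighbourhood of each non-degenerate stationary point and, after a change of variables normalizing the phase to a quadratic, recognize the resulting patch as a (discrete) Schr\"odinger-type propagator, for which the sharp weighted estimate of rate $t^{-1/2}$ for $\sigma>1/2$ is classical. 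Combining the negligible non-stationary zone, the weight-suppressed fold zone, and these normalized quadratic patches, and summing over $j$, gives \eqref{D0-as2}; verifying that the fold genuinely lies at nonzero velocity (so that the weight can suppress it) and carrying out the quadratic normalization are the two technical points on which the argument rests.
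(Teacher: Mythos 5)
Your treatment of \eqref{D0-as1} is correct and is essentially the paper's own argument: the linear term $|N+j|\theta$ does not affect $\Phi_v''$ or $\Phi_v'''$, the zeros of $g''$ at $\pm\arccos\varkappa$ are simple with $g'''\neq 0$ there and $g''\neq 0$ at the band edges, so $\max(|g''|,|g'''|)\ge c_0>0$ on $[-\pi,\pi]$, and van der Corput on a fixed finite partition gives a bound $\mathcal{O}(t^{-1/3})$ uniform in $n,k$.

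For \eqref{D0-as2}, however, your argument has two problems, one of which is fatal. First, your claim that in the bulk zone $|p|\le p_0-\eps$ the uniform pointwise bound $Ct^{-1/2}$ cannot yield the weighted $\ell^2$ estimate and that ``genuine $L^2$ cancellation'' is needed is mistaken: the paper controls the operator norm by the \emph{Hilbert--Schmidt} norm of the weighted kernel, and since $\sum_n(1+|n|)^{-2\sigma}<\infty$ for $\sigma>1/2$, a kernel bound uniform in $n,k$ immediately gives $\Vert K(t)\Vert_{\ell^2_\sigma\to\ell^2_{-\sigma}}\le Ct^{-1/2}$. (It is the Schur-type row summation that would require $\sigma>1$; Hilbert--Schmidt does not.) So your proposed quadratic normalization and appeal to a ``classical'' discrete Schr\"odinger estimate is unnecessary there --- and, in the discrete setting, dubious, since the discrete Schr\"odinger symbol has the same fold degeneracy. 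Second, and fatally, in the transition zone $\bigl||p|-p_0\bigr|\le\eps$ you assert $(1+|n|)^{-\sigma}(1+|k|)^{-\sigma}\lesssim t^{-2\sigma}$. But there only $|n-k|\gtrsim t$ is known, hence only $\max(|n|,|k|)\gtrsim t$ (take $n=0$, $k\approx p_0t$), so the product of weights is merely $\lesssim t^{-\sigma}$. Redoing the Hilbert--Schmidt sum correctly: for each difference $p$ with $|p|\approx p_0t$ one has $\sum_n(1+|n|)^{-2\sigma}(1+|n-p|)^{-2\sigma}\lesssim t^{-2\sigma}$, there are $\sim\eps t$ admissible values of $p$, and your kernel bound in this zone is only $t^{-1/3}$; this gives a Hilbert--Schmidt norm $\lesssim t^{-1/3}\,t^{(1-2\sigma)/2}=t^{1/6-\sigma}$, which is $\mathcal{O}(t^{-1/2})$ only when $\sigma\ge 2/3$. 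Your three-zone splitting with fixed $\eps$ therefore leaves the range $1/2<\sigma<2/3$ uncovered. This is exactly the difficulty the paper's proof is designed to overcome: it decomposes a fixed neighbourhood of the degenerate point $\theta_0$ into shells $t_j\le|\theta-\theta_0|\le t_{j+1}$ with $t_j=t^{-2^{-j}}$, $1\le j\le N$, where $N$ is chosen so that $\sigma>1/2+2^{-N}$, and on each shell distinguishes the case $|v-v_0|\le\frac12 v_0t_jt^{\eps}$ (where the set of admissible differences $p=n-k$ has cardinality only $\sim t_jt^{1+\eps}$, each contributing $t^{-2\sigma}$ after the transverse summation, to be played against the van der Corput bound $(tt_j)^{-1/2}$) from the case $|v-v_0|\ge\frac12 v_0t_jt^{\eps}$ (where integration by parts on the shell gives $t^{-1}t_j^{-1}\le t^{-1/2}$). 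Without some substitute for this multi-scale interplay between shell size, the distance $|v-v_0|$, and the weights, the rate $t^{-1/2}$ for \emph{all} $\sigma>1/2$ is out of reach.
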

\begin{proof}
It suffices to consider the operator $K(t)$ with the kernel
\begin{equation}\label{cK}
[K(t)]_{n,k}=\int_{-\pi}^{\pi} \psi(\theta)
\E^{-\I tg(\theta)}\, \E^{-\I\theta|n-k|}d\theta,
\end{equation}
where $\psi(\theta)$ is some smooth function, and obtain the asymptotics
\begin{equation}\label{K-as1}
\Vert K(t)\Vert_{\ell^1\to \ell^{\infty}}=\mathcal{O}(t^{-1/3}),\quad t\to\infty,
\end{equation}
\begin{equation}\label{K-as2}
\Vert K(t)\Vert_{\ell^2_\sigma\to \ell^2_{-\sigma}}=\mathcal{O}(t^{-1/2}),
\quad t\to\infty,\quad\sigma>1/2.
\end{equation}
\smallskip\\
{\it Step i)}
Since
\begin{equation}\label{vert}
\Vert K(t)\Vert_{\ell^1\to\ell^\infty}=\sup\limits_{\Vert f\Vert_{\ell^1}=1,\Vert g\Vert_{\ell^1}=1}
\langle f, K(t)g\rangle
\le \sup\limits_{n,k\in\Z}|[K(t)]_{n,k}|,
\end{equation}
then for \eqref{K-as1} it suffices to prove that
\begin{equation}\label{K-as11}
\sup_{n,k\in Z}|[K(t)]_{n,k}|\le C t^{-1/3}.
\end{equation}
Abbreviate $v:=\frac{|n-k|}t\ge 0$ and set $\varkappa=(2+m^2-\sqrt{4m^2+m^4})/2$,
$0<\varkappa<1$.
It is easy to check that for $v\not= v_0:=\sqrt\varkappa$ the phase function
\begin{equation}\label{phi}
\Phi_v(\theta)=g(\theta)+v\theta
\end{equation}
 has at most two non-degenerate stationary points.
In the case  $v=v_0$ there exists a unique degenerate stationary point $\theta_0=-\arccos\varkappa$,
$-\pi/2<\theta_0<0$,
such that  $\Phi'''(\theta_0)=\sqrt{\varkappa}\not =0$. 
Hence \eqref{K-as11} follows from the following lemma:
\begin{lemma}\label{lem:vC0} (cf.\ \cite{St})
Consider the oscillatory integral
\[
I(t) = \int_a^b \psi(\theta) \E^{\I t \phi(\theta)} d\theta, \qquad -\pi \le a < b \le \pi,
\]
where $\phi(\theta)$ is real-valued smooth function, and $|\psi(\theta)|+|\psi'(\theta)|\le M$.
If $|\phi^{(k)}(\theta)| >0$, $\theta\in [a,b]$, for some $k\ge 2$  then
\[
|I(t)| \le C_k (M)\big(t\min_{[a,b]}|\phi^{(k)}(\theta)|\big)^{-1/k}, \quad t\ge 1.
\]
\end{lemma}
{\it Step ii)}
The norm of the operator $K(t):\ell^2_\sigma\to\ell^2_{-\sigma}$ is equivalent to the 
norm of the operator $K_\sigma(t)=(1+|n|)^{-\sigma}K(t)(1+|k|)^{-\sigma}:\ell^2\to\ell^2$.
Hence for \eqref{K-as2} it suffices to prove that the Hilbert-Schmidt norm of $K_\sigma(t)$
does not exceed $Ct^{-1/2}$, i.e.
\begin{equation}\label{GS}
\Big[\sum_{n,k\in Z}\frac {([K(t)]_{n,k})^2}{(1+|n|)^{2\sigma}(1+|k|)^{2\sigma}}\Big]^{1/2}
\le Ct^{-1/2},\quad\sigma>1/2.
\end{equation}
We divide the domain of integration in \eqref{cK} into the domains
\begin{equation}\label{Jpm}
{\bf J}_{\pm}=\{\theta:\:|\theta\mp\theta_0|\le \nu|\theta_0|\},\quad
{\bf J}=[-\pi, \pi]\setminus ({\bf J}_+\cup {\bf J}_-),
\end{equation}
where $\nu=\nu(m)\in(0,1/2]$, will be specified below.
Since  $|\Phi''(\theta)|\ge C(\nu)$ for $\theta\in{\bf  J}$, we infer
\[
\sup_{n,k\in\Z}\big|\int_{\bf J} \psi(\theta) \E^{-\I t\Phi_v(\theta)}d\theta\big|\le Ct^{-1/2},
\quad t\ge 1,
\]
from the van der Corput Lemma~\ref{lem:vC0}. Consequently, \eqref{GS} for the part over $\bf J$  follows. 
For the part over ${\bf J}_-$ we apply integration by parts to obtain
\[
\sup_{n,k\in\Z}\big|\int_{{\bf J}_-} \psi(\theta) \E^{-\I t\Phi_v(\theta)}d\theta\big|\le Ct^{-1},
\quad t\ge 1,
\] 
from which \eqref{GS} for the part over ${\bf J}_-$ follows. 

It remains to consider the part over ${\bf J}_+$.
For any fixed  $\sigma>1/2$,  there exist an integer $N>0$ such that
\begin{equation}\label{sigma}
\sigma>1/2+(1/2)^{N}.
\end{equation}
Denote $t_j=t^{-(\frac 12)^j}$, $1\le j\le N$,  $t_0=0$, $t_{N+1}=\nu|\theta_0|$.
We further divide the domain ${\bf J}_+$ into subdomains  
${\bf J}_+^j=t_j\le |\theta-\theta_0|\le t_{j+1}$, 
$0\le j\le N$. For the integral over ${\bf J}_+^0$ the estimate \eqref{GS} evidently holds.
It remains to get \eqref{GS} for the operators $K_j(t)$ with kernels
\begin{equation}\label{Kj-def} 
[K_j(t)]_{n,k}=\int_{{\bf J}_+^j} \psi(\theta) \E^{-\I t\Phi_v(\theta)}d\theta,\quad 1\le j\le N.
\end{equation}
By the van der Corput lemma
\begin{equation}\label{C7}
\sup_{n,k\in\Z}|[K_j(t)]_{n,k}|
\le Ct^{-1/2}\left(\min_{\theta\in J_+^j}|\Phi''_v(\theta)|\right)^{-1/2}
\!\!\!\le C(tt_j)^{-1/2}.
\end{equation}
Now we choose $\ve=2^{-N}$, so that $t^{\ve}=t^{-1}_N$, and  consider $|v_0-v|\le \frac 12 v_0 t_j t^\ve$ and $|v_0-v|\ge \frac 12 v_0 t_j t^\ve$ separately.
\smallskip

In the first case, we take $T_j=\left\{(n,k)\in\Z^2:\ |v_0t-|n-k||\le \frac 12v_0 t_j t^{1+\ve}\right\}$
as the domain of summation. Since this domain is symmetric  with respect to the
map $(n,k)\mapsto (-n, -k)$, we can make the change of variables $p=n-k$, $q=n+k$ and estimate
\[
b_j(t):=\sum_{(n,k)\in T_j}\frac {1}{(1+|n|)^{2\sigma}(1+|k|)^{2\sigma}}
\]
by
\[
b_j(t)\le \sum_{q\in\Z}\sum_{p=\ceil{v_0(t-\frac 12t_j t^{1+\ve})}}^{\floor{v_0(t+\frac 12t_j t^{1+\ve})}}
\frac {2}{(1+\frac{1}{2}|p+q|)^{2\sigma}(1+\frac{1}{2}|p-q|)^{2\sigma}},
\]
where $\floor{\cdot}$ and $\ceil{\cdot}$ denote the usual floor and ceiling functions, respectively.
The sum with respect to $p$  is finite with the number of summands less then
$\floor{v_0t_j t^{1+\ve}}+2$. Since $t_j t^{1+\ve}\le t$ for $j=1,\dots, N$,
 we have $p\geq \frac 12 v_0t$ in the domain of summation.
Consequently $p+q\geq \frac 12 v_0t$ for $q\geq 0$ and $p-q\geq \frac 12 v_0t$ for $q<0$.
Using these estimates and interchanging the order of summation, we get
\begin{equation}\label{GGG}
 b_j(t)\leq C\,\frac{\floor{t_j t^{1+\ve}}}{t^{2\sigma}}\leq C t_j t^{1+\ve-2\sigma}.
\end{equation}
Thus, by \eqref{C7}, \eqref{GGG}, and \eqref{sigma}
\begin{equation}\label{GG}
\sum_{n,k\in T_j}\frac {([K_j(t)]_{n,k})^2}{(1+|n|)^{2\sigma}(1+|k|)^{2\sigma}}
\leq\sup_{n,k\in \Z}([K_j(t)]_{n,k})^2 b_j(t)\leq C t^{-2\sigma + \ve}\leq C t^{-1}
\end{equation}
implying
\begin{equation}\label{C8}
\Vert K_j(t)\Vert_{\ell^2_\sigma\to\ell^2_{-\sigma}}\le Ct^{-1/2},\quad j=1,\dots,N,
\quad\sigma>1/2,
\end{equation}
in the first case.

\smallskip

In the second case, we apply integration by parts. To this end we have to estimate:
(a) $|\Phi^\prime_v(\theta)|^{-1}$ at the points $\theta_0\pm t_j$ and $\theta_0 \pm t_{j+1}$,  and
(b) the integral of the function $|\Phi^{\prime\prime}_v(\theta)|(\Phi^\prime_v(\theta))^{-2}$ between these points.
But since the function $\Phi^{\prime\prime}_v(\theta)$ does not change its sign on the intervals
$[\theta_0+t_j, \theta_0+t_{j+1}]$ and $[\theta_0-t_{j+1},\theta_0-t_j]$, the antiderivative of
$|\Phi^{\prime\prime}_v(\theta)|(\Phi^\prime_v(\theta))^{-2}$ is equal up to a sign to $(\Phi^\prime_v(\theta))^{-1}$.
Thus it is sufficient to consider the case (a) only.

We have ${\Phi}_v(\theta)=g(\theta)+v\theta$, therefore
\[
\Phi^\prime_v(\theta)=g^\prime(\theta)+v
=g^\prime(\theta_0)+\frac 12 g^{\prime\prime\prime}(\tilde\theta)(\theta - \theta_0)^2+v
=\frac 12 g^{\prime\prime\prime}(\tilde\theta)(\theta - \theta_0)^2+v-v_0.
\]
Here we used $g^{\prime}(\theta_0)=-v_0$ and $g^{\prime\prime}(\theta_0)=0$.
Hence for large $t$
\[
|\Phi^\prime_v(\theta_0\pm t_{j+s})|\ge |v-v_0|-Ct^2_{j+s}\ge t_j(\frac {v_0t^\ve}2-C)\ge C_1t_j, 
\quad j=1,\dots, N-1,\quad s=0,1
\]
and then
\begin{equation}\label{denom}
|[K_j(t)]_{n,k}|\le Ct^{-1}t_j^{-1}\le C t^{-1/2}, \quad j=1,\dots, N-1.
\end{equation}

In the case $j=N$, we have $|v-v_0|\ge \frac 12 v_0$. Furthermore,
\[
\Phi^\prime_v(\theta_0\pm t_{N+1})
=\frac 12 g^{\prime\prime\prime}(\tilde\theta)(\nu|\theta_0|)^2+v-v_0.
\]
Since $|g^{\prime\prime\prime}(\theta)|\le G=G(\mu)$, $\theta\in [-\pi,\pi]$,
then we can choose 
$\nu=\min\{\frac 12, \sqrt{\frac{2v_0}{3G\theta_0^2}}\,\}$ to obtain
$|\Phi^\prime_v(\theta_0\pm t_{N+1})|\ge \frac 16 v_0$.
Respectively, $|\Phi^\prime_v(\theta_0\pm t_{N+1})|^{-1}\le 6/v_0$, and hence
\begin{equation}\label{denom1}
|[K_N(t)]_{n,k}|\le Ct^{-1}.
\end{equation}
Taking into account \eqref{denom} and \eqref{denom1}, we obtain \eqref{C8} also in this case.
\end{proof}

\section{Jost solutions}
\label{Jost-func}

In this section, we establish basic properties of the Jost functions. For related results in the special case of a diagonal potential, we refer to \cite{BAO}, \cite{AO}.
In the special case of supersymmetric  operators these results can also be inferred from the corresponding results for Jacobi operators (cf.\ \cite[Sect.\ 14.3]{tjac}).

Denote by $\Gamma=(-\sqrt{m^2+4},-m)\cup (m,\sqrt{m^2+4})$, $\Xi:=\C\setminus\overline\Gamma$, and
let $\Xi_+=\{\lambda\in\Xi,\,\re\lambda\ge 0\}$.
For any $\lambda\in \overline\Xi_+$ consider the Jost solutions ${\bf w}=(u, v)$ to 
\begin{equation} \label{D1}
{\D}{\bf w}=\lambda{\bf w}
\end{equation}
satisfying the boundary conditions
\begin{equation}\label{J2}
{\bf w}_n^{\pm}(\theta)=
\begin{pmatrix}
u_n^{\pm}(\theta)\\
v_n^{\pm}(\theta)
\end{pmatrix}
\to
\begin{pmatrix}
1\\
\alpha_{\pm}(\theta)
\end{pmatrix}e^{\mp\I\theta n},\quad n\to\pm\infty,
\end{equation}
where 
\[
\alpha_{\pm}(\theta)=\frac{1-e^{\pm\I\theta}}{m+\lambda},
\]
and $\theta=\theta(\lambda)\in\overline\Sigma$ solves
\[
2-2\cos\theta=\lambda^2-m^2.
\]
The boundary condition \eqref{J2} arise naturally in \eqref{D1}  with
$Q \equiv 0$.
For nonzero $Q$ with $q^{ij}\in \ell_1^1$
the Jost solution exists everywhere in $\overline\Xi_+$, but for 
$q^{ij}\in\ell^1$ it exists only away from the edges of continuous spectrum.
Introduce
\begin{equation}\label{Jostcut}
{\bf h}^\pm_n(\theta)=\E^{\pm \I n \theta}{\bf w}^\pm_n(\theta)
\end{equation}
and set
\begin{align*}
\overline\Sigma_M:&=\{\theta\in\overline\Sigma:|\im\theta|\le M\}, \quad M\ge 1,\\
\overline\Sigma_{M,\delta}:&=\{\theta\in\overline\Sigma_{M}:\, |\E^{-\I\theta} \pm 1|>\delta
\},\quad 0<\delta<\sqrt 2.
\end{align*}
\begin{proposition}\label{Jost-sol}
(i) Let $q^{ij}\in\ell^1_s$ with $s=0,1,2$.
Then the functions ${\bf h}^\pm_n(\theta)$ can be differentiated $s$ times on $\overline\Sigma_{M,\delta}$, and the following estimates hold:
\begin{equation}\label{dh-est}
|\frac{\partial^p}{\partial \theta^p}{\bf h}^\pm_n(\theta) |\le C(M,\delta)\max ((\mp n)|n|^{p-1}, 1),  ~~~n\in\Z,~~~ 0\le p\le s,~~~ \theta\in \overline\Sigma_{M,\delta}.
\end{equation}\\
(ii) If additionally  $q^{ij}\in \ell_{s+1}^1$, then ${\bf h}^\pm_n(\theta)$ can be differentiated $s$ times on $\overline\Sigma_M$, and the following estimates hold:
\begin{equation}\label{Jostderiv}
|\frac{\partial^{p}}{\partial \theta^{p}}{\bf h}^\pm_n(\theta) |\le C(M)\max ((\mp n)|n|^{p}, 1),\quad n\in\Z,\quad 0\le p\le s,\quad\theta\in\overline\Sigma_M.
\end{equation}
\end{proposition}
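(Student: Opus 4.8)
The plan is to gauge away the free oscillation by the substitution ${\bf h}^\pm_n=\E^{\pm\I n\theta}{\bf w}^\pm_n$ and to recast the eigenvalue equation \eqref{D1} as a discrete Volterra summation equation solved by successive approximation. Writing \eqref{D1} componentwise with $(du)_n=u_n-u_{n+1}$ and $(d^*u)_n=u_n-u_{n-1}$, the hypothesis $q^{21}_n\neq-1$ (with $q^{12}\equiv q^{21}$) makes the associated transfer matrix invertible, so the recursion is well posed in both directions and the solutions normalized by \eqref{J2} are unique. Since conjugation by the scalar $\E^{\pm\I n\theta}$ commutes with the multiplication operator $Q_n$, it leaves $Q$ unchanged while turning the two free solutions $\E^{\mp\I\theta n}(1,\alpha_\pm)$, $\E^{\pm\I\theta n}(1,\alpha_\mp)$ into the constant vector $(1,\alpha_\pm)$ and the oscillating vector $\E^{\pm2\I\theta n}(1,\alpha_\mp)$. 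Treating $Q$ as a source and using variation of constants, one obtains for the $+$ solution an equation of the form ${\bf h}^+_n=(1,\alpha_+)+\sum_{k>n}G(n,k;\theta)\,Q_k\,{\bf h}^+_k$, where $G$ is assembled from these two free solutions divided by their Wronskian $W(\theta)$; a direct computation gives $W(\theta)\propto\sin\theta$ and a $\theta$-dependent kernel of the form $\bigl(\E^{2\I\theta(n-k)}-1\bigr)/W(\theta)$. The $-$ solution is handled identically, with the sum running over $k<n$.

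The first key structural point is the triangular (Volterra) nature of the sum: because $k$ ranges only over $k>n$, the iterated kernels acquire the usual factorial gain, so the Neumann series ${\bf h}^+=\sum_j {\bf h}^+_{(j)}$ converges absolutely and the $p=0$ bound $|{\bf h}^+_n|\le C$ follows from $\sum_k|Q_k|<\infty$. The second point is that, since $\im\theta\le0$ and $k>n$, the factor $\E^{2\I\theta(n-k)}$ has modulus $\le1$, so no growth is created. For part~(i) I would restrict to $\overline\Sigma_{M,\delta}$, where $|W(\theta)|$ is comparable to $|\sin\theta|\sim|1-\E^{-\I\theta}|\,|1+\E^{-\I\theta}|\ge c(\delta)>0$ precisely by the defining condition $|\E^{-\I\theta}\pm1|>\delta$; hence $|G(n,k;\theta)|\le C(M,\delta)$ uniformly. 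To obtain the derivative bounds I would differentiate the series termwise (legitimate by uniform convergence of the differentiated series), noting that each $\partial_\theta$ on $\E^{2\I\theta(n-k)}$ produces a factor $(n-k)$ while $\partial_\theta$ on $\alpha_\pm(\theta)$ or on $W(\theta)^{-1}$ produces bounded factors. Estimating with $(n-k)^p\le 2^p(|n|^p+|k|^p)$, the $|k|^p$-contribution is summable because $q^{ij}\in\ell^1_p\subseteq\ell^1_s$, and one checks that for $n\ge0$ one has $k-n\le|k|$ (no $|n|$-growth, bound $C$) whereas for $n<0$ the $|n|^p$-contribution factors out as $|n|^p\sum_k|Q_k|$, producing exactly $\max((-n)|n|^{p-1},1)$; the derivatives of all orders $\le p$ of the iterates are propagated through the Volterra iteration by a simultaneous product/Gronwall (multinomial) estimate.

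For part~(ii) the enlarged domain $\overline\Sigma_M$ includes the band edges $\E^{-\I\theta}=\pm1$, where $W(\theta)\propto\sin\theta$ vanishes. The remedy is that the numerator $\E^{2\I\theta(n-k)}-1$ vanishes there simultaneously, so the quotient $\bigl(\E^{2\I\theta(n-k)}-1\bigr)/W(\theta)$ remains bounded after extracting one extra factor; concretely $|G(n,k;\theta)|\le C(M)\max(|n-k|,1)$ uniformly up to the edges. Running the same series estimates with this kernel costs exactly one additional power of $|n|$ in the conclusion (yielding $\max((\mp n)|n|^{p},1)$) and, via $|n-k|\le|n|+|k|$, one additional weight on the potential, since the $|k|$-factor is now summable only under $q^{ij}\in\ell^1_{s+1}$.

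The step I expect to be the main obstacle is precisely this edge analysis in part~(ii): one must show that the apparent $1/\sin\theta$ singularity of $G$ is genuinely removable, i.e.\ that $\bigl(\E^{2\I\theta(n-k)}-1\bigr)/\bigl(1-\E^{-2\I\theta}\bigr)$ together with all its $\theta$-derivatives up to order $s$ is bounded by $C\max(|n-k|,1)$ uniformly on $\overline\Sigma_M$, and then to organize the termwise differentiation so that the resulting polynomial-in-$(n-k)$ losses are matched exactly by the weights $\ell^1_{s+1}$, keeping the full differentiated Neumann series summable.
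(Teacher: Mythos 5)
Your proposal is correct and follows essentially the same route as the paper: the paper also sets up the Green's function (Volterra) equation ${\bf h}_n^{+}=(1,\alpha_+)^{T}+\sum_{k>n}G(k-n,\cdot)Q_k{\bf h}_k^{+}$ (with the diagonal term absorbed into an invertible matrix $A_n$, which is where $q^{21}_n\neq-1$ enters), solves it by successive approximations, and obtains exactly your two kernel bounds --- $|\partial^p G|\le C(M,\delta)(k-n)^p$ away from the edges and $|\partial^p G|\le C(M)(k-n)^{p+1}$ up to the edges, the latter from the cancellation of the $1/\sin\theta$ singularity by the vanishing numerator $1-\E^{-2\I\theta(k-n)}$. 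The only cosmetic difference is that the paper differentiates the integral equation and re-applies successive approximations to the differentiated equation, rather than differentiating the Neumann series termwise as you do, but the estimates and weight accounting are identical.
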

\begin{proof}
The Green's function representation for the solutions of \eqref{D1} reads:
\begin{equation}\label{JR}
{\bf h}_n^{\pm}(\theta)=\begin{pmatrix}
1\\
\alpha_{\pm}(\theta)
\end{pmatrix}+\sum\limits_{k=n}^{\pm\infty}
G^{\pm}(k-n,\theta)Q_k{\bf h}_k^{\pm}(\theta),
\end{equation}
where
\begin{eqnarray}\nonumber
G^{\pm}(l,\theta)\!\!\!&=&\!\!\!\frac{(m+\lambda)}{2\I\sin\theta}\begin{pmatrix}
1-\E^{\mp 2\I\theta l} & \alpha_{\mp}-\alpha_{\pm}\E^{\mp 2\I\theta l}\\
\alpha_{\pm}-\alpha_{\mp}\E^{\mp 2\I\theta l}       
&(1-\E^{\mp 2\I\theta l})\frac{\lambda-m}{m+\lambda}
\end{pmatrix},\quad \pm l\ge 1,\\
\nonumber\\
\nonumber
G^{+}(0,\theta)\!\!\!&=&\!\!\!\begin{pmatrix}
0  &  0 \\
-1 &  0
\end{pmatrix},\quad
G^{-}(0,\theta)=\begin{pmatrix}
0     & -1 \\
0     & 0 
\end{pmatrix}.
\end{eqnarray}
We consider the case $``+"$ only since in the $``-"$ case the proof is similar. 
Abbreviate ${\bf h}_n(z)={\bf h}_n^+(\theta)$ with $z=\E^{-\I\theta}$.
Equation \eqref{JR} implies
\begin{equation}\label{green}
A_n{\bf h}_n(z)=\begin{pmatrix}
1\\\alpha_+
\end{pmatrix}
+\sum\limits_{k=n+1}^{\infty} G(k-n,z)Q_k{\bf h}_k(z),
\end{equation}
where
\[
G(l,z)=\frac{(m+\lambda)}{z^{-1} - z}\begin{pmatrix}
1-z^{2l}& \alpha_{+}-\alpha_{-}z^{2l}\\
\alpha_{-}-\alpha_{+}z^{2l}  &(1-z^{2l})\frac{\lambda-m}{m+\lambda}
\end{pmatrix},\quad A_n=\begin{pmatrix}
1 & 0 \\ q_n^{11} &1+q_n^{12}\end{pmatrix},
\]
and $\alpha_{\pm}=\alpha_{\pm}(z)=(1-z^{\mp 1})/(m+\lambda)$.

For $\theta\in \overline\Sigma_{M,\delta}$, we have  $|z^2 -1|\geq C(\delta)>0$. Then
\[
|G(l,z)|\leq \frac{C(M)}{|z^2 - 1|}\leq C(M,\delta),\quad l> 0,
\]
and the method of successive approximations (cf.\ \cite{tjac}) implies $|{\bf h}_n(z)|\leq C(M,\delta)$.
Then \eqref{dh-est} with $p=0$ follows.
Furthermore,
\begin{equation}
\label{ddots}|\frac{d^p}{dz^p}  G(l,z)|\leq C(M,\delta) (k-n)^p ,\quad p\ge 1,\quad l> 0,
\quad \theta\in \overline\Sigma_{M,\delta}.
\end{equation}
\smallskip\\
Now let $q^{ij}\in\ell^1_1$. Consider the first derivative of ${\bf h}_n(z)$. We have
\begin{equation}\label{appr}
A_n\frac{d}{dz} {\bf h}_n(z)= \begin{pmatrix}
0\\\frac{d}{dz}\alpha_{+}
\end{pmatrix}+\phi_n(z)+\sum_{m=n+1}^\infty  G(k-n,z)Q_k \frac{d}{dz}{\bf h}_k(z),
\end{equation}
where
\[
\phi_n(z):=\sum_{k=n+1}^\infty\frac{d}{dz}  G(k-n,z)Q_k {\bf h}_k(z).
\]
Moreover, we we have the estimate $|\phi_n(z)|\leq C(M,\delta)$ for $ n\geq 0$ and $\theta\in \overline\Sigma_{M,\delta}$
by \eqref{dh-est} with $p=0$ and \eqref{ddots}.
Applying the method of successive approximations  to \eqref{appr}, we get \eqref{dh-est} with $p=1$.
For the case $p= 2$, we proceed in the same way.

The estimate \eqref{Jostderiv} can be obtained from \eqref{green} by the same approach by virtue of the estimate
$|\frac{d^p}{dz^p} G(l,z)|\leq C(M)l^{p+1}$, which is valid for  $l>0$.
\end{proof}
\begin{corollary} In the case $q^{ij}\in l^1$ Proposition \ref{Jost-sol} (i)  implies in particular that for any
$\theta\in\overline\Sigma\setminus\{0;\pm\pi\}$
we have the estimate $|{\bf h}_n^\pm(\theta)|\leq C(\theta)$ for all $n\in\Z$.
Here $C(\theta)$ can be chosen uniformly on compact subsets of $\overline{\Sigma}$ avoiding the band edges.
Together with \eqref{Jostcut} this implies
\begin{equation}\label{De0}
| {\bf w}^{\pm}_n(\theta) |\le C(\theta) \E^{\pm \im(\theta)n}\, ,
\quad \theta\in\overline\Sigma\setminus\{0;\pm\pi\},\quad n\in\Z.
\end{equation}
\end{corollary}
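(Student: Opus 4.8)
The plan is to read the bound directly off Proposition~\ref{Jost-sol}(i) in the case $s=0$ and then transport it through the defining relation \eqref{Jostcut}. The only structural point to check is that the truncated regions $\overline\Sigma_{M,\delta}$ exhaust $\overline\Sigma$ away from the band edges. Concretely, within $\overline\Sigma$ one has $\E^{-\I\theta}=1$ precisely at $\theta=0$ and $\E^{-\I\theta}=-1$ precisely at $\theta=\pm\pi$, so the constraint $|\E^{-\I\theta}\pm1|>\delta$ in the definition of $\overline\Sigma_{M,\delta}$ excludes only a shrinking neighborhood of $\{0;\pm\pi\}$ as $\delta\to0$. Hence, given any compact $K\subset\overline\Sigma$ with $K\cap\{0;\pm\pi\}=\emptyset$, the quantity $\sup_{\theta\in K}|\im\theta|$ is finite and $\inf_{\theta\in K}\min\{|\E^{-\I\theta}-1|,|\E^{-\I\theta}+1|\}$ is strictly positive; choosing $M\ge\max\{1,\sup_{\theta\in K}|\im\theta|\}$ and $\delta\in(0,\sqrt2)$ below that infimum yields $K\subset\overline\Sigma_{M,\delta}$.

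First I would apply \eqref{dh-est} with $p=0$ on such a region $\overline\Sigma_{M,\delta}$. For $p=0$ the right-hand side of \eqref{dh-est} reduces to $C(M,\delta)\max\{\mp\mathrm{sgn}(n),1\}=C(M,\delta)$, which is genuinely independent of $n$; this is exactly the successive-approximation bound $|{\bf h}_n(z)|\le C(M,\delta)$ established in the proof of the proposition. Thus $|{\bf h}^\pm_n(\theta)|\le C(M,\delta)$ uniformly in $n\in\Z$ and $\theta\in K$. Taking $K$ to be a small compact neighborhood of a fixed $\theta\in\overline\Sigma\setminus\{0;\pm\pi\}$ gives $|{\bf h}^\pm_n(\theta)|\le C(\theta)$, and the uniformity on compact subsets avoiding the band edges is built into the construction, since $M$ and $\delta$ may be fixed once for the whole of $K$.

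Finally I would invert \eqref{Jostcut} to write ${\bf w}^\pm_n(\theta)=\E^{\mp\I n\theta}{\bf h}^\pm_n(\theta)$ and use $|\E^{\mp\I n\theta}|=\E^{\pm\im(\theta)n}$, so that multiplying the uniform bound on ${\bf h}^\pm_n$ by this factor produces \eqref{De0}. I expect no substantial analytic obstacle, since all the real work has already been done in Proposition~\ref{Jost-sol}; the only points demanding care are the bookkeeping identifying the exclusion set $\{|\E^{-\I\theta}\pm1|\le\delta\}$ with neighborhoods of the band edges, and the observation that for $p=0$ the estimate \eqref{dh-est} is uniform in $n$, so that the exponential factor in \eqref{De0} arises solely from $\E^{\pm\im(\theta)n}$.
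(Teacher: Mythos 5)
Your proposal is correct and follows exactly the route the paper intends: the corollary is stated as an immediate consequence of Proposition~\ref{Jost-sol}~(i) with $s=0$ (where the bound \eqref{dh-est} is uniform in $n$) combined with \eqref{Jostcut}, and your verification that compact subsets of $\overline\Sigma$ avoiding $\{0;\pm\pi\}$ lie in some $\overline\Sigma_{M,\delta}$ supplies precisely the bookkeeping the paper leaves implicit.
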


Now we define the Jost function for $\re\lambda\le 0$.
Similarly to the analysis for $\re\lambda\ge 0$, we consider solutions of system \eqref{D1}
according the boundary conditions
\begin{equation}\label{tJ2}
\tilde {\bf w}_n^{\pm}(\theta)=
\begin{pmatrix}
\tilde u_n^{\pm}(\theta)\\
\tilde v_n^{\pm}(\theta)
\end{pmatrix}
\to
\begin{pmatrix}
\tilde\alpha_{\pm}(\theta)\\
1
\end{pmatrix} \E^{\mp\I\theta n},\quad n\to\pm\infty,
\end{equation}
where 
\[
\tilde\alpha_{\pm}(\theta)=\frac{1-\E^{\mp\I\theta}}{\lambda-m}.
\]
Using a similar Green's function, Propositions \ref{Jost-sol}
can be extended to functions $\tilde {\bf w}_n^{\pm}(\theta)$. In particular, 
for  $\tilde {\bf w}_n^{\pm}(\theta)$ and
$\tilde{\bf h}_n^{\pm}(\theta)=\tilde{\bf w}_n^{\pm}(\theta)\E^{\pm \I\theta n}$
the bounds \eqref{dh-est}, \eqref{Jostderiv} and \eqref{De0} hold.

\section{Wronskians}
\label{Wr-sect}

As before we consider the case $\re\lambda\ge 0$ only.
Denote by  $W({\bf w}^1,{\bf w}^2)$  the Wronski determinant of any two solutions
${\bf w}^1$ and ${\bf w}^2$ to  \eqref{D1}:
\[
W({\bf w}^1,{\bf w}^2):=\left|\begin{array}{cc}
 u^1_n & u^2_n\\
 v^1_{n+1}    &  v^2_{n+1}
\end{array}\right|.
\]
It is easy to check that if $q^{12}\equiv q^{21}$ then $W({\bf w}^1,{\bf w}^2)$ is constant in $n\in\Z$
for arbitrary solutions ${\bf w}^1$ and ${\bf w}^2$ of \eqref{D1}.
The Jost solutions ${\bf w}^{\pm}(\theta)$ and ${\bf w}^{\pm}(-\theta)$ 
are independent for $\theta\in (-\pi,0)\cup(0,\pi)$ since 
\begin{equation}\label{W1}
W({\bf w}^{\pm}(\theta),{\bf w}^{\pm}(-\theta))=\left|\begin{array}{cc}
 1 & 1\\
\alpha_{\pm}(\theta)e^{\mp i\theta}     & \alpha_{\pm}(-\theta)e^{\pm\I\theta} 
\end{array}\right|=\pm\frac{2\I\sin\theta}{m+\lambda}\not =0,
\end{equation}
if $\theta\not =0,\pm\pi$.
Then there exist (unique) functions $a_{\pm}(\theta)$ and $b_{\pm}(\theta)$
such that
\begin{equation}\label{scatr}
{\bf w}^{\pm}(\theta)=a_{\mp}(\theta){\bf w}^{\mp}(-\theta)+b_{\mp}(\theta){\bf w}^{\mp}(\theta).
\end{equation}
Calculating the Wronskians, for $\theta\in (-\pi,0)\cup(0,\pi)$ we obtain 
\begin{equation}\label{a+}
a_-(\theta)=a_+(\theta)=\frac{W(\theta)}{W({\bf w}^-(-\theta),{\bf w}^-(\theta))}=
\frac{W(\theta)(m+\lambda)}{2\I\sin\theta}
\end{equation}
and
\begin{equation}\label{b+}
b_{\pm}(\theta)=\pm\frac{W^{\pm}(\theta)}
{W({\bf w}^-(\theta),{\bf w}^-(-\theta))}=\pm\frac{W^{\pm}(\theta)(m+\lambda)}{2\I\sin\theta},
\end{equation}
where
\[
W(\theta)=W({\bf w}^+(\theta),{\bf w}^-(\theta)),\quad
W^{\pm}(\theta)=W({\bf w}^{\mp}(\theta),{\bf w}^{\pm}(-\theta)).
\]
\begin{lemma}\label{W} Let $q^{ij}\in\ell^1$. Then
$W(\theta)\not =0$ for $\theta\in(-\pi,0)\cup(0,\pi)$.
\end{lemma}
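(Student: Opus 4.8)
The plan is to argue by contradiction, exploiting the reality of the potential together with the constancy and bilinearity of the Wronskian. Fix $\theta\in(-\pi,0)\cup(0,\pi)$; since then $2-2\cos\theta\in(0,4)$, the associated $\lambda$ is real with $\lambda^2=m^2+2-2\cos\theta>m^2$, so in particular $m+\lambda\neq 0$, and we are away from the band edges $\theta=0,\pm\pi$. Hence, by the Corollary to Proposition~\ref{Jost-sol}, the bounded Jost solutions ${\bf w}^{\pm}(\pm\theta)$ all exist for $q^{ij}\in\ell^1$. The key structural input is a conjugation symmetry: because $\D=\D_0+Q$ has real coefficients and $\lambda\in\R$, the complex conjugate of any solution of \eqref{D1} is again a solution. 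Comparing the boundary data \eqref{J2}, and using that $\overline{\alpha_\pm(\theta)}=\alpha_\pm(-\theta)$ for real $\theta$ (since $\overline{(1-\E^{\pm\I\theta})/(m+\lambda)}=(1-\E^{\mp\I\theta})/(m+\lambda)$), together with uniqueness of the Jost solutions, yields $\overline{{\bf w}^{\pm}(\theta)}={\bf w}^{\pm}(-\theta)$.

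Now suppose, for contradiction, that $W(\theta)=W({\bf w}^+(\theta),{\bf w}^-(\theta))=0$. Since $W$ is constant in $n$ (this is where condition \eqref{q-con} enters) and the solution space of \eqref{D1} is two-dimensional, the vanishing of $W(\theta)$ forces ${\bf w}^+(\theta)$ and ${\bf w}^-(\theta)$ to be linearly dependent; as both are nonzero by virtue of their prescribed asymptotics \eqref{J2}, there is a constant $c\neq 0$ with ${\bf w}^+(\theta)=c\,{\bf w}^-(\theta)$. Taking complex conjugates and invoking the symmetry above gives ${\bf w}^+(-\theta)=\bar c\,{\bf w}^-(-\theta)$. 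I then evaluate $W({\bf w}^+(\theta),{\bf w}^+(-\theta))$ in two ways. On one hand, the ``$+$'' case of \eqref{W1} gives $W({\bf w}^+(\theta),{\bf w}^+(-\theta))=\tfrac{2\I\sin\theta}{m+\lambda}$. On the other hand, substituting the two proportionality relations and using that the Wronskian is bilinear (linear in each column) in its two solution arguments,
\[
W({\bf w}^+(\theta),{\bf w}^+(-\theta))=c\bar c\,W({\bf w}^-(\theta),{\bf w}^-(-\theta))=|c|^2\Big(-\tfrac{2\I\sin\theta}{m+\lambda}\Big)
\]
by the ``$-$'' case of \eqref{W1}. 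Since $\sin\theta\neq 0$ and $m+\lambda\neq 0$, dividing the two expressions yields $1=-|c|^2$, which is impossible for $c\neq 0$. Hence $W(\theta)\neq 0$.

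The delicate points are essentially bookkeeping rather than analysis: verifying the conjugation symmetry $\overline{{\bf w}^{\pm}(\theta)}={\bf w}^{\pm}(-\theta)$ (which is precisely where the reality of $Q$ and $\lambda$ is used), and carefully tracking the opposite signs attached to the two cases of \eqref{W1}. It is exactly this sign discrepancy, a manifestation of flux conservation, that produces the contradiction, so I expect that to be the step most easily gotten wrong. Note that the hypothesis $q^{ij}\in\ell^1$ enters only through Proposition~\ref{Jost-sol} and its Corollary, to guarantee that the Jost solutions exist and are bounded throughout the open bands; no decay beyond $\ell^1$ is needed because the argument stays strictly inside the continuous spectrum and never touches the band edges.
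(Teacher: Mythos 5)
Your proof is correct and is essentially the paper's argument in contrapositive form: both rest on the conjugation symmetry ${\bf w}^{\pm}(-\theta)=\overline{{\bf w}^{\pm}(\theta)}$ together with the opposite signs in \eqref{W1}. The paper derives the flux-conservation identity $|a_-(\theta)|^2-|b_-(\theta)|^2=1$ from \eqref{scatr} and concludes via $a_-(\theta)=W(\theta)(m+\lambda)/(2\I\sin\theta)$, whereas your assumption ${\bf w}^+(\theta)=c\,{\bf w}^-(\theta)$ is exactly the degenerate case $a_-=0$, $b_-=c$ of that same identity.
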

\begin{proof}
Since ${\bf w}^+(-\theta)=\overline{\bf w}^+(\theta)$ then (\ref {scatr}) implies 
\[
{\bf w}^+(-\theta)=\overline b_-(\theta){\bf w}^-(-\theta)+\overline a_-(\theta){\bf w}^-(\theta).
\]
Therefore, 
\[
W({\bf w}^+(\theta),{\bf w}^+(-\theta))=(|a_-(\theta)|^2-|b_-(\theta)|^2)
W({\bf w}^-(-\theta),{\bf w}^-(\theta)).
\]
Hence, from \eqref{W1} it follows that
\begin{equation}\label{ab}
|a_-(\theta)|^2-|b_-(\theta)|^2=1.
\end{equation}
Now the assertion of the lemma follows from \eqref{a+} and \eqref{b+}.
\end{proof}

\begin{remark}
Proposition \ref{Jost-sol} and Lemma \ref{W} eliminates the possibility of embedded eigenvalues 
in the continuous spectrum
of $\D$ for $q^{ij}\in\ell^1$ because the space of solutions of the Dirac system \eqref{D1} for 
$\lambda\in (-\sqrt{4+m^2},-m)$ and $\lambda\in (m,\sqrt{4+m^2})$
is spanned by the two fundamental solutions $\tilde{\bf w}_n^{\pm}(\theta)$ and ${\bf w}_n^{\pm}(\theta)$
which are not square summable near $n\to\pm\infty$.
\end{remark}
Now we discuss an alternative definition of resonances.
\begin{definition}
For $\lambda\in\{m,\sqrt{4+m^2}\}$
any nonzero solution ${\bf w}\in\bl^{\infty}$  of the equation  $\D{\bf w}=\lambda {\bf w}$
is called a resonance function,
and in this case  $\lambda$  is called a resonance.
\end{definition}

\begin{lemma}\label{W0}
Let $q^{ij}\in\ell^1_1$. Then $\lambda=m$ (or~$\lambda=\sqrt{4+m^2}$) is a resonance
if and only if $W(0)=0$ (or $W(\pi)=0$).
\end{lemma}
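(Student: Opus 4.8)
The plan is to reduce everything to the linear (in)dependence of the two band-edge Jost solutions and to read that dependence off from the Wronskian. Fix $\theta_*=0$, so that $\lambda=m$ (the case $\theta_*=\pi$, i.e.\ $\lambda=\sqrt{4+m^2}$, is treated below). Since $q^{ij}\in\ell^1_1$, Proposition \ref{Jost-sol}(ii) with $s=0$ guarantees that ${\bf w}^\pm(0)={\bf h}^\pm(0)$ exist as genuine solutions of \eqref{D1}, and by \eqref{Jostderiv} with $p=0$ together with \eqref{J2} the solution ${\bf w}^+(0)$ is bounded as $n\to+\infty$ (with ${\bf w}^+_n(0)\to\binom{1}{0}$) while ${\bf w}^-(0)$ is bounded as $n\to-\infty$. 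By definition $W(0)=W({\bf w}^+(0),{\bf w}^-(0))$. Since the solution space of \eqref{D1} is two-dimensional and ${\bf w}\mapsto(u_n,v_{n+1})$ is a linear isomorphism onto $\C^2$ (the recursion is solvable in both directions because $q^{12}_n=q^{21}_n\neq-1$ by \eqref{q-con}), the constant $W(0)$ is precisely the determinant of ${\bf w}^+(0),{\bf w}^-(0)$ in these coordinates. Hence $W(0)=0$ if and only if ${\bf w}^+(0)$ and ${\bf w}^-(0)$ are linearly dependent, and it remains to show that this dependence is equivalent to $\lambda=m$ being a resonance.

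For the direction ``$W(0)=0\Rightarrow$ resonance'' the dependence gives ${\bf w}^+(0)=c\,{\bf w}^-(0)$ with $c\neq0$; this single solution is then bounded both as $n\to+\infty$ (being ${\bf w}^+(0)$) and as $n\to-\infty$ (being a multiple of ${\bf w}^-(0)$), so it lies in $\bl^\infty$ and is a nonzero resonance function.

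The converse rests on the following observation, which I would isolate as a lemma: any two solutions of $\D{\bf w}=m{\bf w}$ that are both bounded as $n\to+\infty$ have vanishing Wronskian (and symmetrically at $-\infty$). Granting this, a resonance function ${\bf w}\in\bl^\infty$ is bounded at both ends, so $W({\bf w},{\bf w}^+(0))=0$ and $W({\bf w},{\bf w}^-(0))=0$; as ${\bf w}\neq0$, this forces ${\bf w}^+(0)$ and ${\bf w}^-(0)$ to each be proportional to ${\bf w}$, hence to each other, so $W(0)=0$. To prove the observation at $\theta_*=0$ I would use the band-edge structure of \eqref{D1} at $\lambda=m$: the first component gives $v_{n+1}-v_n=q^{11}_nu_n+q^{12}_nv_n$, whose right-hand side is summable for a bounded solution, so $v_n$ converges to some $v_\infty$; the second component gives $u_n-u_{n-1}=2mv_n-q^{21}_nu_n-q^{22}_nv_n$, and summing it shows $u_n$ grows linearly unless $v_\infty=0$. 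Thus $v_n\to0$ for every bounded solution, whence $W({\bf w}^1,{\bf w}^2)=u^1_nv^2_{n+1}-u^2_nv^1_{n+1}\to0$; being $n$-independent, it vanishes.

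For $\lambda=\sqrt{4+m^2}$ ($\theta_*=\pi$) the same scheme applies after the substitution $u_n=(-1)^np_n$, $v_n=(-1)^nr_n$, which converts the two components of \eqref{D1} into sum-relations; eliminating $r_n$ produces a perturbed difference equation $p_{n+1}-2p_n+p_{n-1}=\epsilon_n$ with $\sum_n|\epsilon_n|<\infty$, so the discrete slope $p_{n+1}-p_n$ converges and must vanish for a bounded solution, again yielding $W\to0$ and hence $W(\pi)=0$ as the criterion. The main obstacle is exactly this band-edge asymptotic analysis: away from it the argument is pure two-dimensional linear algebra, but identifying the correct decaying quantity (the lower component at $\theta_*=0$, the discrete slope of $(-1)^nu_n$ at $\theta_*=\pi$) and controlling it via the $\ell^1$-summability of $Q$ is where the real work lies.
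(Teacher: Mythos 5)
Your proof is correct, but it is organized around a different key lemma than the paper's. The paper introduces a second solution ${\bf w}^*$ normalized by $W({\bf w}^+,{\bf w}^*)=1$, assumes its first component is bounded at $+\infty$, reads off $v^*_{n+1}\to 1$ directly from that normalization (since $u^+_n\to1$, $v^+_{n+1}\to0$), and then uses the second row of \eqref{D1} to force linear growth of $u^*$, a contradiction; the equivalence follows by expanding ${\bf w}^-=\alpha{\bf w}^++\beta{\bf w}^*$ with $\beta=W(0)$. You instead prove the structural fact that \emph{every} solution bounded at $+\infty$ has $v_n\to0$ when $\lambda=m$ (convergence of $v_n$ from $\ell^1$-summability in the first row, vanishing of the limit from the growth mechanism in the second row), so any two such solutions have Wronskian tending to zero, hence zero by constancy; both implications then reduce to two-dimensional linear algebra, correctly using that $W$ detects linear dependence because \eqref{q-con} makes ${\bf w}\mapsto(u_n,v_{n+1})$ an isomorphism of the solution space with $\C^2$. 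The analytic core is identical in both arguments (band-edge recursion plus $\ell^1$ decay forces linear growth of $u$ unless the relevant quantity vanishes), but your packaging buys a reusable statement about all bounded solutions and a transparent treatment of both directions, while the paper's is more economical, since the Wronskian normalization hands it the limit of $v^*$ without first proving convergence. Your $(-1)^n$ substitution at $\lambda=\sqrt{4+m^2}$, reducing to $p_{n+1}-2p_n+p_{n-1}=\epsilon_n$ with summable $\epsilon_n$, is likewise a clean alternative to the paper's direct manipulation; the only step you leave implicit is the closing computation there, which is worth writing out: with $s^i_n=p^i_{n+1}-p^i_n\to0$ one gets $W=-\frac{1}{\tilde m}\bigl(p^1_n s^2_n-p^2_n s^1_n\bigr)+o(1)\to0$, so the constant $W$ vanishes.
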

\begin{proof}
{\it Step i)}
In the case $\lambda=m$, we have
\begin{equation}\label{w0}
{\bf w}^\pm_n
= \begin{pmatrix}
u_n^{\pm}\\ v_n^{\pm}
\end{pmatrix}
= \begin{pmatrix}
1\\0
\end{pmatrix} +o(1),\quad n\to\pm\infty.
\end{equation}
Introduce another solution ${\bf w}^*=(u^*,v^*)$ to  \eqref{D1} satisfying 
\begin{equation}\label{w*}
W({\bf w}^+,{\bf w}^*)=u_n^+v_{n+1}^*-u_n^*v_{n+1}^+=1. 
\end{equation}
If the sequence $u_n^*$ is bounded for  positive $n$, then from
\eqref{w0}--\eqref{w*} it follows that $v_{n}^* \to 1$ for $n\to +\infty$.
Then the second line of equation \eqref{D1}  with $\lambda=m$  implies
\[
u^*_{n+1}-u^*_{n}=mv^*_{n+1}-q^{21}_{n+1}u^*_{n+1}-q^{22}_{n+1}v^*_{n+1}\to m,\quad n\to +\infty.
\]
Hence, for  sufficiently large positive $n_0$, we obtain
\[ 
u^*_{n_0+k}=u^*_{n_0}+km+o(k),\quad k\to+\infty,
\]
which contradicts our assumption on the boundedness of $u_n^*$.
Since
${\bf w}^- = \alpha {\bf w}^+ + \beta {\bf w}^*$, then it is a bounded solution if and only if 
$\beta = W({\bf w}^+,{\bf w}^-) =0$.
\smallskip\\
{\it Step ii)} Consider now the case  $\lambda=\sqrt{m^2+4}$. In this case
\begin{equation}\label{w-pi}
{\bf w}^\pm_n
= \begin{pmatrix}
u_n^{\pm}\\v_n^{\pm}
\end{pmatrix}
= (-1)^n \begin{pmatrix}
1\\2/\tilde m
\end{pmatrix} +o(1),\quad n\to\pm\infty.
\end{equation}
where $\tilde m=m+\sqrt{m^2+4}$. Introduce another solution ${\bf w}^*=(u^*,v^*)$ to  \eqref{D1} satisfying
\eqref{w*} and suppose that the sequence $u_n^*$ is bounded for  positive $n$, 
then the sequence $v_n^*$ is also bounded due to \eqref{w-pi}.
Then \eqref{w*} implies that
\[
\tilde mv^*_{n+1}=-2u^*_{n}+(-1)^n\tilde m+o(1),\quad n\to +\infty.
\]
Now the second line of equation \eqref{D1}  with $\lambda=\sqrt{m^2+4}$ yields 
\[
u^*_{n+1}=-u^*_{n}+(-1)^n\tilde m+o(1),\quad n\to +\infty.
\]
Hence, for  sufficiently large positive $n_0$, we obtain
\[ 
u^*_{n_0+k}=(-1)^ku^*_{n_0}+k(-1)^{n_0+k}\tilde m+o(k),\quad k\to+\infty,
\]
which contradicts our assumption on the boundedness of $u_n^*$. Since
${\bf w}^- = \alpha {\bf w}^+ + \beta {\bf w}^*$, then it is a bounded solution if and only if 
$\beta = W({\bf w}^+,{\bf w}^-) =0$.
\end{proof}
\begin{remark}\label{W/W}
From \eqref{ab} it follows that the zeros of the Wronskians $W(\theta)$ and $W^{\pm}(\theta)$
at the points $0,\pi$ can be at most of first order. 
\end{remark}
For $\re\lambda\le 0$ the corresponding Wronskian $\tilde W(\theta)$ has the same  properties.

\section{The limiting absorption principle}
\label{LAP-D}

Given the Jost solutions, we can express the kernel of the resolvent 
$\cR(\lambda):\bl^2 \to \bl^2$ for $\lambda\in \C\setminus\overline\Gamma$.
Recall that $\theta\mapsto\omega(\theta)$ is a biholomorphic map $\Sigma\to\Xi$.
The method of variation of parameters gives
\begin{lemma}\label{cR-rep}
Let $q^{ij}\in\ell^1$. Then
for any $\lambda\in \C\setminus\overline\Gamma$, $\re\lambda\ge 0$, the operators
$\cR(\lambda):l^2\to l^2$ 
can be represented by the integral kernel as follows
\begin{equation}\label{RJ-rep}
[\cR(\lambda)]_{k,n}=\frac{1}{W(\theta(\lambda))}
\begin{cases}
{\bf w}_k^-(\theta(\lambda))\otimes{\bf w}_n^+(\theta(\lambda)),\quad k\le n,\\
{\bf w}_k^+(\theta(\lambda))\otimes{\bf w}_n^-(\theta(\lambda)),\quad k\ge n,
\end{cases}
\end{equation}
where
\[
{\bf w}_k^1\otimes{\bf w}_n^2=\begin{pmatrix}
u_k^1u_n^2& v_{k+1}^1u_n^2\\
u_k^1v_n^2& v_{k+1}^1v_n^2
\end{pmatrix}
\]
and
\[
[\cR(\lambda){\bf w}]_n=\sum\limits_{k=-\infty}^{\infty}
[\cR(\lambda)]_{k,n} \begin{pmatrix}
u_k\\v_{k+1}
\end{pmatrix}.
\]
\end{lemma}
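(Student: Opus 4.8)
The plan is to verify the claimed resolvent kernel by the classical variation-of-parameters construction, exploiting that for $\lambda\in\C\setminus\overline\Gamma$ with $\re\lambda\ge 0$ the two Jost solutions ${\bf w}^-(\theta(\lambda))$ and ${\bf w}^+(\theta(\lambda))$ are the unique (up to scaling) solutions of the homogeneous equation $\D{\bf w}=\lambda{\bf w}$ that are square summable at $-\infty$ and $+\infty$, respectively. Here square summability follows from the estimate \eqref{De0}: for $\lambda\in\Xi$ we have $\im\theta(\lambda)<0$ strictly, so ${\bf w}^+_n(\theta)$ decays like $\E^{-\im(\theta)n}$ as $n\to+\infty$ and ${\bf w}^-_n(\theta)$ decays like $\E^{\im(\theta)n}$ as $n\to-\infty$. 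Since these live near opposite ends, they are linearly independent, and indeed $W(\theta(\lambda))\neq 0$ off the spectrum, so the right-hand side of \eqref{RJ-rep} is well defined.

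First I would let $G$ denote the operator with the kernel on the right of \eqref{RJ-rep} and show directly that $(\D-\lambda)G={\rm Id}$. Writing out $[(\D-\lambda)G{\bf w}]_n$ using the explicit form of $\D=\D_0+Q$ with $(du)_n=u_n-u_{n+1}$, the two branches $k\le n$ and $k\ge n$ of the kernel each solve the homogeneous equation in the free index, so all interior terms cancel by the equation $\D{\bf w}^\pm=\lambda{\bf w}^\pm$. What survives is a boundary (jump) contribution coming from the transition at $k=n$, i.e. from the difference operator $d$ acting across the seam where the kernel switches branches. Collecting these jump terms and using the definition of the Wronskian $W({\bf w}^1,{\bf w}^2)=u^1_nv^2_{n+1}-u^2_nv^1_{n+1}$, together with its $n$-independence (established in Section~\ref{Wr-sect} under $q^{12}\equiv q^{21}$), the boundary contribution reduces to $W(\theta(\lambda))$ divided by itself, producing exactly the identity. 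The precise bookkeeping of this jump is the technical heart of the argument: one must track how the tensor-product structure ${\bf w}^1_k\otimes{\bf w}^2_n$ and the shift in the second component $v_{k+1}$ interact with $d$ and $d^*$, so that the surviving terms assemble into the Wronskian rather than some unbalanced expression.

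Having checked $(\D-\lambda)G={\rm Id}$, I would confirm that $G$ maps $\bl^2\to\bl^2$ and equals the resolvent. Boundedness on $\bl^2$ for $\lambda$ off the spectrum follows because each branch pairs a solution decaying at $-\infty$ with one decaying at $+\infty$, giving a kernel that decays exponentially in $|n-k|$ by \eqref{De0}; hence $G$ is bounded (in fact a Schur/Young estimate on the kernel suffices). Since $\D-\lambda$ is invertible as a bounded operator for $\lambda\in\C\setminus\overline\Gamma$ and $G$ is a bounded right inverse, $G=(\D-\lambda)^{-1}=\cR(\lambda)$, which is the assertion.

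\emph{Main obstacle.} The delicate point is the seam computation at $k=n$: I expect the bulk of the work to be verifying that the jump produced by the discrete first-difference operator $d$ across the two branches collapses exactly to the constant Wronskian $W(\theta(\lambda))$, with the correct normalization and with the shifted index $v_{k+1}$ handled consistently. Everything else (linear independence, decay, square summability, and the passage from right inverse to resolvent) is routine given Proposition~\ref{Jost-sol}, its corollary \eqref{De0}, and the constancy of the Wronskian from Section~\ref{Wr-sect}.
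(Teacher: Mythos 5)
Your overall strategy coincides with the paper's: the paper offers nothing beyond the phrase ``the method of variation of parameters gives'' before the lemma, and your plan --- show that the kernel in \eqref{RJ-rep} defines a bounded operator $G$ with $(\D-\lambda)G=\mathrm{Id}$, the seam at $k=n$ collapsing to the constant Wronskian, and then conclude $G=\cR(\lambda)$ --- is exactly the standard way to substantiate that phrase. The boundedness argument via \eqref{De0} (exponential decay in $|n-k|$ off the spectrum, then a Schur-type estimate) and the passage from bounded right inverse to resolvent are sound.

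One step, however, is genuinely wrong as stated: you assert that ${\bf w}^+(\theta(\lambda))$ and ${\bf w}^-(\theta(\lambda))$ are linearly independent ``since these live near opposite ends,'' and conclude $W(\theta(\lambda))\neq 0$ for all $\lambda\in\C\setminus\overline\Gamma$. Decay at opposite ends does not imply independence: if the two Jost solutions happen to be proportional, then their common multiple decays at \emph{both} ends and is an $\ell^2$ eigenfunction. This is precisely what occurs at the (finitely many) eigenvalues of $\D$ in $\R\setminus\overline\Gamma$, whose possible existence the paper notes in the introduction; at such $\lambda$ (which can satisfy $\re\lambda\ge 0$, e.g.\ lie in $[0,m)$) one has $W(\theta(\lambda))=0$, and neither the right-hand side of \eqref{RJ-rep} nor $\cR(\lambda)$ is defined. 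The correct argument runs in the opposite direction: the lemma concerns $\cR(\lambda)$, so $\lambda$ is tacitly in the resolvent set; if $W(\theta(\lambda))$ vanished for such $\lambda$, then ${\bf w}^+\propto{\bf w}^-$ would be square summable by \eqref{De0}, making $\lambda$ an eigenvalue --- a contradiction. (Lemma \ref{W} of the paper cannot be invoked here, as it only covers real $\theta\in(-\pi,0)\cup(0,\pi)$, i.e.\ the continuous spectrum.) With this repair, and with the seam computation at $k=n$ actually carried out --- which you correctly identify as the remaining bookkeeping, and which must respect the shifted second component $v_{k+1}$ in the paper's kernel convention --- your proof is complete and matches the paper's intended argument.
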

Similarly, for any $\lambda\in \C\setminus\overline\Gamma$, $\re\lambda\le 0$, we obtain
\begin{equation}\label{tRJ-rep}
[\cR(\lambda)]_{k,n}=\frac{1}{\tilde W(\theta(\lambda))}
\begin{cases}
\tilde{\bf w}_k^-(\theta(\lambda))\otimes\tilde{\bf w}_n^+(\theta(\lambda)),\quad k\le n,\\
\tilde{\bf w}_k^+(\theta(\lambda))\otimes\tilde{\bf w}_n^-(\theta(\lambda)),\quad k\ge n,
\end{cases}
\end{equation}
where $\tilde W(\theta)=W(\tilde{\bf w}^+(\theta),\tilde{\bf w}^-(\theta))$.

The representations \eqref{RJ-rep}--\eqref{tRJ-rep}, 
the fact that $W(\theta)$ and $\tilde W(\theta)$ do not vanish for $\lambda\in\Gamma$, 
and the bound \eqref{De0} imply the limiting absorption principle for the perturbed one-dimensional Dirac equation.
\begin{lemma}\label{BV}
Let $q^{ij}\in\ell^1$. Then the convergence
\begin{equation}\label{esk}
    \cR(\lambda\pm \I\varepsilon)\to \cR(\lambda\pm \I0),\quad\varepsilon\to 0+,\quad \lambda\in\Gamma
\end{equation}
holds in ${\mathcal L}(\bl^2_\sigma,\bl^2_{-\sigma})$ with $\sigma>1/2$.
\end{lemma}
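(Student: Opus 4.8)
The plan is to read the boundary behavior of the resolvent directly off the Jost-function representation \eqref{RJ-rep} (and \eqref{tRJ-rep} when $\re\lambda\le 0$), and to upgrade pointwise convergence of the kernel to Hilbert--Schmidt convergence of the associated weighted operator. Fix $\lambda_0\in\Gamma$; by symmetry it suffices to treat $\re\lambda_0>0$, the case $\re\lambda_0<0$ being identical after replacing ${\bf w}^\pm,W$ by $\tilde{\bf w}^\pm,\tilde W$. Under the biholomorphism $\theta(\lambda)$ the point $\lambda_0$ corresponds to a boundary point $\theta_0\in(-\pi,0)\cup(0,\pi)$ lying strictly between the band edges $0,\pm\pi$, and the two limits $\lambda_0\pm\I\varepsilon$ produce boundary values $\theta(\lambda_0\pm\I0)$ on the real axis near $\theta_0$. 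In particular all the relevant values of $\theta$ remain in some $\overline\Sigma_{M,\delta}$ with $\delta>0$, i.e.\ uniformly away from the band edges.

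First I would record a uniform bound on the kernel near $\lambda_0$. By Proposition~\ref{Jost-sol} with $s=0$ the functions ${\bf h}^\pm_n(\theta)$, hence ${\bf w}^\pm_n(\theta)=\E^{\mp\I n\theta}{\bf h}^\pm_n(\theta)$, are continuous on $\overline\Sigma_{M,\delta}$; therefore $W(\theta)=W({\bf w}^+(\theta),{\bf w}^-(\theta))$ is continuous there and, by Lemma~\ref{W}, nonzero on $(-\pi,0)\cup(0,\pi)$, so $|W(\theta)|\ge c>0$ on a neighborhood of $\theta_0$. The decisive observation is that in \eqref{RJ-rep} the exponential factors supplied by \eqref{De0}, whose constant may be taken locally uniform away from the band edges, cancel thanks to the ordering of $k$ and $n$: since $\im\theta\le 0$ on $\overline\Sigma$, the branch $k\le n$ is controlled by $|{\bf w}^-_k(\theta)|\,|{\bf w}^+_n(\theta)|\le C^2\E^{-\im(\theta)k}\E^{\im(\theta)n}=C^2\E^{\im(\theta)(n-k)}\le C^2$, and the branch $k\ge n$ symmetrically. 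Hence
\begin{equation*}
|[\cR(\lambda)]_{k,n}|\le \frac{C^2}{|W(\theta(\lambda))|}\le C',\qquad k,n\in\Z,
\end{equation*}
uniformly for $\lambda$ in a neighborhood of $\lambda_0$ in $\C\setminus\overline\Gamma$ and up to the boundary $\Gamma$.

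It then remains to invoke dominated convergence. Conjugating by the weights, $\|\cR(\lambda)\|_{\bl^2_\sigma\to\bl^2_{-\sigma}}$ equals the $\bl^2\to\bl^2$ norm of the operator with kernel $(1+|k|)^{-\sigma}[\cR(\lambda)]_{k,n}(1+|n|)^{-\sigma}$, and is dominated by the corresponding Hilbert--Schmidt norm. For small $\varepsilon$ the difference kernel is pointwise bounded by $2C'(1+|k|)^{-\sigma}(1+|n|)^{-\sigma}$, whose square is summable over $\Z^2$ exactly when $\sigma>1/2$; and since the Jost functions and $W$ extend continuously to the boundary, $[\cR(\lambda_0\pm\I\varepsilon)]_{k,n}\to[\cR(\lambda_0\pm\I0)]_{k,n}$ for each fixed $k,n$ as $\varepsilon\to0+$. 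Dominated convergence yields
\begin{equation*}
\sum_{k,n\in\Z}\frac{\big|[\cR(\lambda_0\pm\I\varepsilon)]_{k,n}-[\cR(\lambda_0\pm\I0)]_{k,n}\big|^2}{(1+|k|)^{2\sigma}(1+|n|)^{2\sigma}}\to 0,
\end{equation*}
that is, Hilbert--Schmidt and hence operator-norm convergence in $\mathcal{L}(\bl^2_\sigma,\bl^2_{-\sigma})$, which is \eqref{esk}.

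I expect the main obstacle to be exactly this uniform kernel bound: one must verify that the exponential growth permitted by \eqref{De0} is precisely offset by the ordering constraint $k\le n$ (respectively $k\ge n$) together with $\im\theta\le 0$, so that nothing blows up as $\lambda$ reaches $\Gamma$. Once this is in place, the weighted summability for $\sigma>1/2$ and the pointwise continuity of the kernel are routine.
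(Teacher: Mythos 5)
Your proposal is correct and follows essentially the same route as the paper's proof: pointwise convergence of the kernel from the representation \eqref{RJ-rep}, a uniform kernel bound obtained from \eqref{De0} (where the exponential factors cancel thanks to the ordering of $k$ and $n$ and $\im\theta\le 0$) together with the nonvanishing of $W$ from Lemma~\ref{W}, and then dominated convergence applied to the weighted Hilbert--Schmidt norm, which is finite precisely for $\sigma>1/2$. The paper compresses all of this into three lines; you have merely made explicit the cancellation and continuity arguments it leaves implicit.
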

\begin{proof}
For any $\lambda\in\Gamma$ and any $n,k\in\Z$, there exist the pointwise limit
\[
[\cR(\lambda\pm \I\ve)]_{n,k}\to[\cR(\lambda\pm \I 0)]_{n,k},\quad \ve\to 0.
\]
Moreover, the bound \eqref{De0} implies that $|[\cR(\lambda\pm \I\ve)]_{n,k}|\le C(\lambda)$.
Hence, the Hilbert--Schmidt norm of the difference $\cR(\lambda\pm \I\ve)-\cR(\lambda\pm \I 0)$
converges to zero in ${\mathcal L}(\bl^2_\sigma,\bl^2_{-\sigma})$ with $\sigma>1/2$ by the dominated convergence theorem.
\end{proof}

Of course the limiting absorption principle implies that the spectrum of $D$ is purely absolutely continuous on $\Gamma$.
\begin{corollary}\label{J-rep}
For any fixed $\sigma>1/2$, the operators
$\cR^{\pm}(\lambda):=\cR(\lambda\pm i0):\bl^2_\sigma\to \bl^2_{-\sigma}$ 
have integral kernels given by
\begin{equation}\label{RJ1-rep}
[\cR^{\pm}(\lambda)]_{n,k} = \frac{1}{W(\theta_{\pm})}\begin{cases}
{\bf w}_n^+(\theta_{\pm})\otimes {\bf w}_k^-(\theta_{\pm}), & n \ge k, \\
{\bf w}_k^+(\theta_{\pm})\otimes {\bf w}_n^-(\theta_{\pm}), & n\le k,
\end{cases}
\end{equation}
for $\lambda\in \Gamma_+=(m,\sqrt{m^2+4})$ and 
\begin{equation}\label{RJ2-rep}
[\cR^{\pm}(\lambda)]_{n,k} = \frac{1}{\tilde W(\theta_{\pm})}\begin{cases}
\tilde{\bf w}_n^+(\theta_{\pm})\otimes 
\tilde{\bf w}_k^-(\theta_{\pm}), & n \ge k, \\
\tilde{\bf w}_k^+(\theta_{\pm})\otimes 
\tilde{\bf w}_n^-(\theta_{\pm}), & n\le k ,
\end{cases}
\end{equation}
for $\lambda\in \Gamma_-=(-\sqrt{m^2+4},-m)$,
where $\theta_+$, and $\theta_-=-\theta_+$
are defined by \eqref{theta_pm}.
\end{corollary}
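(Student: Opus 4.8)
The plan is to obtain the kernel of the limiting resolvent $\cR^{\pm}(\lambda)$ as the boundary value of the kernel in Lemma~\ref{cR-rep}, using the limiting absorption principle (Lemma~\ref{BV}) together with the explicit dependence of all ingredients on the spectral parameter. The key observation is that the entire $\lambda$-dependence in \eqref{RJ-rep} is funneled through $\theta(\lambda)$, and that the biholomorphic map $\theta\mapsto\omega(\theta)$, combined with the relation $2-2\cos\theta=\lambda^2-m^2$, lets us track what happens to $\theta$ as $\lambda$ approaches the real spectral band $\Gamma_+=(m,\sqrt{m^2+4})$ from above and below. By definition \eqref{theta_pm}, $\theta(\lambda+\I0)=\theta_+\in[-\pi,0]$ while $\theta(\lambda-\I0)=\theta_-=-\theta_+\in[0,\pi]$, so the two boundary values of the resolvent correspond to the two choices of sign of $\theta$ on the real band.

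First I would invoke Lemma~\ref{BV} to guarantee that the limits $\cR(\lambda\pm\I0)$ exist in $\mathcal{L}(\bl^2_\sigma,\bl^2_{-\sigma})$ for $\sigma>1/2$, so that it only remains to identify their kernels. Since the pointwise limits of the kernel entries exist (as used in the proof of Lemma~\ref{BV}), I would simply pass to the limit $\ve\to0+$ in the representation \eqref{RJ-rep}. The Jost solutions ${\bf w}_n^\pm(\theta)$ and the Wronskian $W(\theta)$ are continuous up to the boundary of $\overline\Sigma$ away from the band edges (Proposition~\ref{Jost-sol} and the non-vanishing of $W(\theta)$ from Lemma~\ref{W}), so the boundary-value kernels are obtained by substituting $\theta=\theta_\pm$ directly into \eqref{RJ-rep}. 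This yields \eqref{RJ1-rep} for $\lambda\in\Gamma_+$, where the two cases $k\le n$ and $k\ge n$ are just rewritten with the roles of $n$ and $k$ made explicit. The case $\re\lambda\le0$, i.e.\ $\lambda\in\Gamma_-$, is handled identically using the tilde-Jost solutions and the representation \eqref{tRJ-rep}, giving \eqref{RJ2-rep}; here one uses that the analysis of Section~\ref{Jost-func} extends verbatim to $\tilde{\bf w}_n^\pm$ and $\tilde W$.

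The only genuinely substantive point, which I expect to be the main obstacle, is the book-keeping of the $\pm$ labels: one must verify that $\theta_+\in[-\pi,0]$ (the limit from the upper half-plane) indeed corresponds to the kernel written with $\theta_+$ in \eqref{RJ1-rep}, and that the symmetry $\theta_-=-\theta_+$ from \eqref{theta_pm} is consistent with the reflection ${\bf w}^\pm(-\theta)=\overline{{\bf w}^\pm(\theta)}$ already used in Lemma~\ref{W}. In other words, the content is checking that the orientation of the cut---which branch of $\theta(\lambda)$ is selected as $\lambda\to\Gamma_+\pm\I0$---matches the Jost solutions' decay directions, so that the bounded solutions ${\bf w}^+$ and ${\bf w}^-$ remain the correct ones at $\pm\infty$ for each sign. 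Once this matching is confirmed, the formulas follow immediately, and the non-vanishing of $W(\theta_\pm)$ and $\tilde W(\theta_\pm)$ for $\lambda$ in the open band (Lemma~\ref{W}) ensures the kernels are well-defined. Since Corollary~\ref{J-rep} is stated without a proof block in the excerpt, I would present these identifications compactly, treating the substitution $\theta\mapsto\theta_\pm$ as the essential step and leaving the routine re-indexing of the two cases to the reader.
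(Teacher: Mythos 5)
Your proposal is correct and follows exactly the route the paper intends: the corollary is an immediate consequence of the kernel representations \eqref{RJ-rep}--\eqref{tRJ-rep}, the limiting absorption principle of Lemma~\ref{BV}, the continuity of the Jost solutions and Wronskians up to the boundary away from the band edges, and the non-vanishing of $W$ from Lemma~\ref{W}, with only the substitution $\theta\mapsto\theta_\pm$ and re-indexing left to check. The paper gives no separate proof block precisely because it treats these identifications as immediate, just as you do.
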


\section{Dispersive decay}
\label{ll-sec}

Now we are able to prove the $\bl^1\to \bl^\infty$ decay.
\begin {theorem}\label{end1}
Let $q^{ij}\in\ell^1_2$ in the non-resonant case and $q^{ij}\in\ell^1_3$ in the resonant case. 
Then the asymptotics \eqref{fullp}  and \eqref{as1-new} hold, i.e.,
\begin{equation}\label{D-as1}
\Vert \E^{-\I t\D}P_{c}\Vert_{\bl^1\to \bl^\infty}=\mathcal{O}(t^{-1/3}),\quad t\to\infty,
\end{equation}
and
\begin{equation}\label{D-as2}
\Vert \E^{-\I t\D} P_c\Vert_{\bl^2_\sigma\to \bl^2_{-\sigma}}
=\mathcal{O}(t^{-1/2}),\quad t\to\infty,\quad\sigma>1/2.
\end{equation}
\end{theorem}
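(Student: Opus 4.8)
The plan is to reduce the dispersive decay for the perturbed operator $\D$ to the free decay already established in Proposition~\ref{free-as}, together with the resolvent representation from Corollary~\ref{J-rep}. The spectral theorem gives
\[
\E^{-\I t\D}P_c = \frac{1}{2\pi\I}\int_\Gamma \E^{-\I t\lambda}\big[\cR^+(\lambda)-\cR^-(\lambda)\big]\,d\lambda,
\]
so the core task is to insert the explicit kernels \eqref{RJ1-rep}--\eqref{RJ2-rep} and extract the same $t^{-1/3}$ (resp.\ $t^{-1/2}$) decay by stationary phase. After changing variables from $\lambda$ to the quasimomentum $\theta$ (recall $\omega$ is biholomorphic on $\Sigma$), each piece of the kernel becomes an oscillatory integral with phase $\Phi_v(\theta)=g(\theta)+v\theta$, exactly as in \eqref{phi}, but now with the smooth amplitude $\psi$ replaced by the Jost data $\mathbf{h}_n^\pm(\theta)\mathbf{h}_k^\mp(\theta)/W(\theta)$.

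For \eqref{D-as1} I would first split the $\bl^1\to\bl^\infty$ bound into the contributions near each spectral edge and away from them. Away from the edges, $W(\theta)$ is bounded below by Lemma~\ref{W}, and the amplitude together with its first derivative is uniformly bounded in $n,k$ by Proposition~\ref{Jost-sol}(i) (via \eqref{dh-est} with $p=0,1$ on $\overline\Sigma_{M,\delta}$); the van der Corput Lemma~\ref{lem:vC0} then yields $t^{-1/3}$ exactly as in Step~i) of Proposition~\ref{free-as}, the degenerate stationary point $\theta_0$ being dictated by the free phase $g$ and unaffected by $Q$. Near the edges $\theta=0,\pm\pi$ the amplitude is singular because of the factor $1/W(\theta)$. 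In the non-resonant case $W(0),W(\pi)\neq 0$ (by Lemma~\ref{W0}), so the amplitude stays bounded and $q^{ij}\in\ell^1_2$ suffices to control two derivatives via \eqref{Jostderiv}. In the resonant case $W$ has a simple zero (Remark~\ref{W/W}), so $1/W$ has a first-order pole; this is the source of the stronger hypothesis $q^{ij}\in\ell^1_3$, which buys an extra derivative and lets one absorb the singularity after a careful expansion of $W(\theta)$ near its zero.

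For \eqref{D-as2} I would follow Step~ii) of Proposition~\ref{free-as} verbatim, estimating the Hilbert--Schmidt norm of the weighted kernel $K_\sigma(t)$ over the partition $\mathbf{J}_\pm,\mathbf{J}$ of $[-\pi,\pi]$ from \eqref{Jpm} and the dyadic subdivision $\mathbf{J}_+^j$ around the degenerate point. The only modification is that the smooth free amplitude is replaced by the Jost amplitude; since $|\mathbf{h}_n^\pm(\theta)|\le C(\theta)$ uniformly in $n$ on compact subsets avoiding the edges, the same $(tt_j)^{-1/2}$ van der Corput bound \eqref{C7} and the same counting estimate \eqref{GGG} go through, giving $t^{-1/2}$ for $\sigma>1/2$.

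The main obstacle is the edge analysis in the resonant case. Everything away from $\theta=0,\pm\pi$ is a routine transplant of the free argument, but at a resonance the prefactor $1/W(\theta)$ blows up like $1/\theta$, and one must show that this singularity is genuinely integrable against the oscillatory factor without destroying the $t^{-1/3}$ rate. The key is that the resonance forces a specific cancellation: the numerator ${\bf w}_n^+\otimes{\bf w}_k^-$ degenerates compatibly at the edge (the Jost solutions become linearly dependent precisely when $W$ vanishes), so that the combination $(\cR^+-\cR^-)$ remains bounded after the limit. Making this cancellation quantitative, with enough regularity to survive two integrations by parts in the van der Corput estimate, is where the extra decay $q^{ij}\in\ell^1_3$ and the estimate \eqref{Jostderiv} for $p=2$ are indispensable.
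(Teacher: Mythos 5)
Your overall frame (spectral representation, change of variables to $\theta$, van der Corput with the free phase $\Phi_v$) matches the paper, but there is a genuine gap at the heart of your Step i): the claim that the amplitude ``together with its first derivative is uniformly bounded in $n,k$ by Proposition~\ref{Jost-sol}(i)'' is false. The bounds \eqref{dh-est} and \eqref{Jostderiv} are one-sided: $|\partial_\theta{\bf h}^+_k(\theta)|\le C\max(-k,1)$ grows linearly as $k\to-\infty$, and likewise $|\partial_\theta{\bf h}^-_n(\theta)|$ grows as $n\to+\infty$. Since the resolvent kernel for $n\le k$ involves ${\bf h}^+_k\otimes{\bf h}^-_n$ with $n,k$ of arbitrary sign, in the regions $n\le k\le 0$ and $0\le n\le k$ one of the two factors is always evaluated on its ``bad'' side, so van der Corput applied to your amplitude only yields $|[\E^{-\I t\D}P^+_c]_{n,k}|\le C\,(1+\min\{|n|,|k|\})\,t^{-1/3}$; the supremum over $n,k$ is then unbounded and \eqref{eq:m21} does not follow. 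The same linear growth wrecks your Step ii): a factor $(1+|k|)$ in the kernel can be absorbed by the weights only for $\sigma>3/2$, not $\sigma>1/2$. The paper's essential device, absent from your proposal, is to use the scattering relations \eqref{scat-rel} to re-express $T(\theta)\,{\bf h}^+_k(\theta)\otimes{\bf h}^-_n(\theta)$ in those two regions through Jost functions on their good sides only. This produces the uniformly bounded amplitudes $Y^{1}_{n,k},Y^2_{n,k}$ of \eqref{Y-est}, at the price of extra exponentials $\E^{2\I k\theta}$ (resp.\ $\E^{-2\I n\theta}$) which, via the identities \eqref{nk}, are absorbed into a second phase $\tilde\Phi_{\tilde v}$ with $\tilde v=|n+k|/t\ge 0$. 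Both phases are then handled by van der Corput, and the weighted estimate requires the separate Lemma~\ref{lem:+} (applied with $p=0$), covering the near-stationary region ${\bf J}_+$ for either choice of $v$, rather than a verbatim transplant of Proposition~\ref{free-as}(ii).

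A second, smaller inaccuracy concerns the resonant case. The cancellation there is not between the numerator ${\bf w}^+_k\otimes{\bf w}^-_n$ and $W$ (the numerator does not degenerate at a resonance); it is between the explicit factor $\sin\theta$, which appears when $d\lambda$ is rewritten as $\frac{\sin\theta}{g(\theta)}d\theta$, and the at most first-order zero of $W$ guaranteed by Remark~\ref{W/W}. Concretely, $T(\theta)=\frac{2\I\sin\theta}{(m+\lambda)W(\theta)}$ and $R^{\pm}(\theta)=\pm W^{\pm}(\theta)/W(\theta)$ remain bounded, and the role of $q^{ij}\in\ell^1_3$ is to make $W,W^\pm$ twice differentiable (Proposition~\ref{Jost-sol}(ii) with $s=2$), so that $T$ and $R^\pm$ are $C^1$ and \eqref{Y-est} survives at the edges. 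Without identifying this mechanism precisely, the quantitative edge analysis that you correctly single out as the main obstacle cannot be completed.
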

\begin{proof}
We apply the spectral representation
\begin{align}\nonumber
   \E^{-\I t\D}P_{c}= \E^{-\I t\D}P_{c}^++ \E^{-\I t\D}P_{c}^-
   =& \frac 1{2\pi \I}\int\limits_{\Gamma_+}
   \E^{-\I t\omega}( \cR(\lambda+\I 0)- \cR(\lambda-\I 0))\,d\lambda\\
   \label{sp-rep-sol}
&+\frac 1{2\pi \I}\int\limits_{\Gamma_-}
   \E^{-\I t\omega}( \cR(\lambda+\I 0)- \cR(\lambda-\I 0))\,d\lambda.
\end{align}
We consider the first summand only.
Expressing the kernel of the resolvent in terms of the Jost solutions, 
the kernel of $\E^{-\I t\D} P_{c}^+$ reads:
\begin{align*}
\left[ \E^{-\I t\D}P_{c}^+ \right]_{n,k}& =
\frac{1}{2 \pi\I}\int_{\Gamma_+}\E^{-\I t\lambda}
\left[ \frac{{\bf w}_k^+(\theta_+)\otimes {\bf w}_n^-(\theta_+)}{W(\theta_+)}
- \frac{{\bf w}_k^+(\theta_-)\otimes {\bf w}_n^-(\theta_-)}{W(\theta_-)} \right] d\lambda 
\\
& =  \frac{1}{2\pi \I} \int_{-\pi}^{\pi} \frac{\E^{-\I tg(\theta)}}{g(\theta)}\,
\frac{{\bf w}_k^+(\theta)\otimes {\bf w}_n^-(\theta)}{W(\theta)}  \sin\theta\, d\theta
\end{align*}
for $n\le k$ and by symmetry 
$\left[ \E^{-\I t\D}P_{c}^+\right]_{n,k}= \left[ \E^{-\I t\D}P_{c}^+ \right]_{k,n}$ for $n\ge k$.
\smallskip\\
{\it Step i)}
For \eqref{D-as1} it suffices to prove that
\begin{equation}
\label{eq:m21}
\sup_{n,k\in\Z}|\left[ \E^{\I tH}P_{c}^+\right]_{n,k}| =\mathcal{O}( t^{-1/3}),\quad t\to\infty.
\end{equation}
independent of $n,k$. We suppose $n\le k$ for notational simplicity.
Then
\begin{equation}\label{EDP-rep}
\left[ \E^{-\I t\D}P_{c}^+ \right]_{n,k} =  -\frac{m+\lambda}{4\pi} \int_{-\pi}^{\pi}
\frac{\E^{-\I t\Phi_v(\theta)}}{g(\theta)}\,T(\theta){\bf h}_k^+(\theta)\otimes{\bf h}_n^-(\theta) d\theta,
\end{equation}
where $v=\frac{k-n}{t}\ge 0$, $\Phi_v(\theta)$ is defined in \eqref{phi}, and
\[
T(\theta)=\frac{1}{a_-(\theta)}=\frac{2i\sin\theta}{(m+\lambda)W(\theta)}. 
\]
The quantities $T(\theta)$ and
\[ 
R^{\pm}(\theta)=\frac{b_{\pm}(\theta)}{a_{\pm}(\theta)}=\pm\frac{W^{\pm}(\theta)}{W(\theta)}
\]
are known as transmission and reflection coefficient, respectively.
From \eqref{scatr} there follow the scattering relations
\begin{equation}\label{scat-rel}
T(\theta){\bf w}^{\pm}(\theta)= R^{\mp}(\theta){\bf w}^{\mp}(\theta)+{\bf w}^{\mp}(-\theta),
\quad\theta\in[-\pi,\pi],
\end{equation}
which imply the representations
\[
T(\theta){\bf h}_k^+(\theta)\otimes{\bf h}_n^-(\theta)=\!\left\{\!\!\!\begin{array}{ll}
R^-(\theta){\bf h}_n^-(\theta)\otimes
{\bf h}_k^-(\theta)\E^{2\I k\theta} + {\bf h}_n^-(\theta)\otimes{\bf h}_k^-(-\theta), &\! n\leq k\leq 0,\\[1.5mm]                             
R^+(\theta){\bf h}_k^+(\theta)\otimes
{\bf h}_n^+(\theta)\E^{-2\I n\theta} + {\bf h}_k^+(\theta)\otimes{\bf h}_n^+(-\theta), &\! 0\leq n\leq k.
\end{array}\right.
\]
Using  the identities
\begin{align}\nonumber
\I(n-k)+2\I k&=\I(k+n)=-\I|k+n|,\quad n\le k\le 0,\\
\label{nk}
\I(n-k)-2\I n&=-\I(k+n)=-\I|k+n|,\quad 0\le n\le k,
\end{align}
we rewrite \eqref{EDP-rep}  as
\begin{equation}\label{EDP-rep1}
\left[ \E^{-\I t\D}P_{c}^+ \right]_{n,k}=-\frac{m+\lambda}{4\pi} \Big[\int_{-\pi}^{\pi}
\frac{\E^{-\I t\Phi_v(\theta)}}{g(\theta)}\,Y_{n,k}^{1}(\theta)d\theta
+\!\int_{-\pi}^{\pi}
\frac{\E^{-\I t\tilde\Phi_v(\theta)}}{g(\theta)}\,Y_{n,k}^{2}(\theta)d\theta\Big],
\end{equation}
where 
\begin{align*}
Y_{n,k}^1(\theta)&= \begin{cases}
 T(\theta){\bf h}_k^+(\theta)\otimes{\bf h}_n^-(\theta),~~n\leq 0\leq k,\\
{\bf h}_n^-(\theta)\otimes {\bf h}_k^-(-\theta),~~n\leq k\leq 0,\\
{\bf h}_k^+(\theta)\otimes {\bf h}_n^+(-\theta),~~0\leq n\leq k,
\end{cases}\\
Y_{n,k}^2(\theta)&= \begin{cases}
0,~~n\le 0\le k,\\
R^-(\theta){\bf h}_n^-(\theta)\otimes{\bf h}_k^-(\theta),~~n\leq k\leq 0,\\
R^+(\theta){\bf h}_n^+(\theta)\otimes{\bf h}_k^+(\theta),~~0\leq n\leq k,
\end{cases}
\end{align*} 
and
\begin{equation}\label{tphi}
\tilde\Phi_v(\theta)=g(\theta)+\tilde v\theta,~~{\rm with}~~ \tilde v=|n+k|/t\ge 0.
\end{equation}
In the non-resonant case and $q^{ij}\in \ell^1_2$, we have
\begin{equation}\label{Y-est}
|Y_{n,k}^j(\theta)|+|\frac{\partial}{\partial \theta}Y_{n,k}^j(\theta)|\le C,~~j=1,2,
\quad n\le k,
\end{equation} 
by Proposition \ref{Jost-sol} (ii).
It the resonant case for \eqref{Y-est}, we need $q^{ij}\in \ell^1_3$.
Then 
$W(\theta)$ and $W^{\pm}(\theta)$ can be differentiated two times on $[-\pi,\pi]$ 
according to Proposition \ref{Jost-sol} (ii), and hence
\[
|\frac{\partial^p}{\partial \theta^p}T(\theta)|,\,
|\frac{\partial^p}{\partial \theta^p}R^{\pm}(\theta)|\le C, \quad p=0.1,\quad\theta\in [-\pi,\pi],
\]
due to Remark \ref{W/W}. Therefore \eqref{Y-est} follows by Proposition \ref{Jost-sol} (ii).

Now, as in the proof of Proposition~\ref{free-as} (i), we split the domains of integration 
in \eqref{EDP-rep1} into regions where
either the second or third derivative of the phases is nonzero and
apply the van der Corput Lemma~\ref{lem:vC0} to obtain \eqref{eq:m21}.
\smallskip\\
{\it Step ii)}
We represent $\E^{-\I t\D}P_{c}^+$ as the sum
\[
\E^{-\I t\D}P_{c}^+ ={\mathcal K}^{\pm}(t)+{\mathcal K}(t),
\]
where
\begin{align*}
[{\mathcal K}^\pm(t)]_{n,k}&=-\frac{1}{4\pi} \int_{{\bf J}_{\pm}}
\Big[\E^{-\I t\Phi_v(\theta)} Y_{n,k}^1(\theta) +
\E^{-\I \tilde\Phi_v(\theta)} Y_{n,k}^2(\theta)\Big] \frac{d\theta}{g(\theta)},\\
[{\mathcal K}(t)]_{n,k}&=-\frac{1}{4\pi} \int_{\bf J}
\Big[\E^{-\I t\Phi_v(\theta)} Y_{n,k}^1(\theta)+\E^{-\I \tilde\Phi_v(\theta)} Y_{n,k}^2(\theta)\Big] 
\frac{d\theta}{g(\theta)},
\end{align*}
where ${\bf J}_{\pm}$ and $\bf J$ are defined in  \eqref{Jpm}.
The the van der Corput Lemma~\ref{lem:vC0} with $k=2$  and the estimate \eqref{Y-est} imply
\[
\sup_{n\le k}|[{\mathcal K}(t)]_{n,k}|\le C t^{-1/2},\quad t\ge 1.
\]
Hence
\[
\Vert {\mathcal K}(t)\Vert_{\bl^2_\sigma\to\bl^2_{-\sigma}}\le C t^{-1/2},
\quad\sigma>1/2,\quad t\ge 1.
\]
Furthermore, integration by parts gives
\[
\sup_{n\le k}|[{\mathcal K}^-(t)]_{n,k}|\le C t^{-1},\quad t\ge 1,
\]
and hence
\[
\Vert {\mathcal K}^-(t)\Vert_{\bl^2_\sigma\to\bl^2_{-\sigma}}\le C t^{-1},
\quad\sigma>1/2,\quad t\ge 1.
\]
To estimate ${\mathcal K}^+(t)$ we employ the following lemma. 
\begin{lemma}\label{lem:+}
Let  $Y_{n,k}(\theta)$ satisfies 
\begin{equation}\label{newZ-est}
|Y_{n,k}(\theta)|+|\frac{d}{d\theta}Y_{n,k}(\theta)|
\le (1+|n|^{p})(1 + |k|^{p}),\quad n,k\in\Z,
\end{equation}
with some $p\ge 0$. Let $\Phi_{v}(\theta)=g(\theta)+v\theta$,
where $v=|k- n|/t$ or $v=|k+n|/t$. Then for the operator $K(t)$ with the kernel
\[
[K(t)]_{n,k}=\int_{{\bf J}_+}
\E^{-\I t \Phi_v(\theta)}Y_{n,k}(\theta)d\theta
\]
the following asymptotics hold
\begin{equation}\label{newK-as}
\Vert K(t)\Vert_{\ell^2_{p+\sigma}\to\ell^2_{-(p+\sigma)}}={\mathcal O}(t^{-1/2}), \quad t\to\infty, \quad \sigma>1/2.
\end{equation}
\end{lemma}
\begin{proof}
In the case $v=|k- n|/t$,
we repeat literally the main estimates of Proposition \ref{free-as} (ii).
Namely, consider the operators $K_j(t)$ which differ from the operators defined in \eqref{Kj-def}
by the additional factor $Y_{n,k}(\theta)$.  Then  \eqref{C7}  will change according to 
\begin{equation}\label{C7-new}
|[K_j(t)]_{n,k}|\le C t^{-1/2}t_j^{-1/2}(1+|n|^{p})(1 + |k|^{p})
\end{equation}
by virtue of the van der Corput~Lemma~\ref{lem:vC0}.  Furthermore, instead of \eqref{GG}, 
we will use the following estimate
\begin{align*}
\sum_{n,k\in T_j}\frac {([K_j(t)]_{n,k})^2}{(1+|n|)^{2\sigma+2p}(1+|k|)^{2\sigma+2p}}
&\leq\sup_{n,k\in \Z}(\frac{([K_j(t)]_{n,k})^2}{(1+|n|)^{2p}(1+|k|)^{2p}} b_j(t)\\
&\leq C t^{-2\sigma + \ve}\leq C t^{-1}, \quad j=1,\dots,N,
\end{align*}
which implies \eqref{newK-as} in the first case corresponding to $|v_0-v|\le \frac 12 v_0t_jt^\ve$.
Moreover, \eqref{newZ-est}  gives the additional factor $(1+|n|^{p})(1 + |k|^{p})$ in the right-hand side of \eqref{denom} and \eqref{denom1}  
and hence \eqref{newK-as} in the second case when $|v_0-v|\ge \frac 12 v_0t_jt^\ve$.
In the case $v=|k+ n|/t$ the proof is similar.
\end{proof}
Thus, applying \eqref{Y-est} and the Lemma \ref{lem:+} with $p=0$,  we obtain
\[
\Vert {\mathcal K}^+(t)\Vert_{\bl^2_\sigma\to\bl^2_{-\sigma}}\le C t^{-1/2},\quad\sigma>1/2,
\quad t\to\infty.\qedhere
\]
\end{proof}
Now we consider the non-resonant case and prove 
the asymptotics \eqref{as-new} and \eqref{as2-new}.
\begin {theorem}\label{t-new}
Let $q^{ij}\in\ell^1_3$. Then in the non-resonant case 
\begin{equation}\label{Schr-asn1}
\Vert \E^{-\I t\D}P_c\Vert_{\bl^1_{3/2}\to \bl^\infty_{-3/2}}=\mathcal{O}(t^{-4/3}),\quad t\to\infty,
\end{equation}
\begin{equation}\label{Schr-asn2}
\Vert \E^{-\I tH} P_c\Vert_{\ell^2_\sigma\to \ell^2_{-\sigma}}=\mathcal{O}(t^{-3/2}),\quad t\to\infty,\quad\sigma>2.
\end{equation}
\end{theorem}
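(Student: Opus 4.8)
The plan is to proceed exactly as in the proof of Theorem~\ref{end1}, reducing both estimates to the oscillatory integrals in \eqref{EDP-rep} with phases $\Phi_v$ and $\tilde\Phi_{\tilde v}$, but now keeping track of two features that were invisible for the cruder bounds \eqref{D-as1}--\eqref{D-as2}: the first-order vanishing of the amplitude at the band edges, available precisely in the non-resonant case, and the spatial weights, which suppress the contribution of the degenerate stationary point. Starting from \eqref{sp-rep-sol} and \eqref{EDP-rep}, the amplitude of the $\Phi_v$-integral carries the factor $T(\theta)=\frac{2\I\sin\theta}{(m+\lambda)W(\theta)}$, that is a factor $\sin\theta/W(\theta)$. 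In the non-resonant case $W(0),W(\pi)\neq 0$ by Lemma~\ref{W0}, so this factor vanishes to first order at the band-edge stationary points $\theta=0,\pm\pi$; since $q^{ij}\in\ell^1_3$, the functions $W(\theta)$, $W^{\pm}(\theta)$, hence $T(\theta)$ and $R^{\pm}(\theta)$, are twice continuously differentiable on all of $[-\pi,\pi]$ by Proposition~\ref{Jost-sol}(ii) and Remark~\ref{W/W}, so the stationary-phase expansions and integrations by parts below are justified up to the edges.

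For \eqref{Schr-asn2} I would follow the Hilbert--Schmidt scheme of Proposition~\ref{free-as}(ii), splitting $[-\pi,\pi]$ via \eqref{Jpm} into ${\bf J}_\pm$ and ${\bf J}$. On ${\bf J}$ there is no stationary point and repeated integration by parts gives a fast pointwise bound, and on ${\bf J}_-$ (which contains no critical point, the critical points lying in $[-\pi,0]$) integration by parts again suffices. The two genuinely slow mechanisms are the degenerate point in ${\bf J}_+$ and the band edges. Near $\theta_0$ the van der Corput Lemma~\ref{lem:vC0} with $k=3$ gives $t^{-1/3}$, but the cubic point forces $|k-n|\approx v_0 t$ (resp.\ $|k+n|\approx v_0 t$ for $\tilde\Phi_{\tilde v}$), so at least one of $|n|,|k|$ is $\gtrsim v_0 t$; repeating the two-case argument following \eqref{C7} (summation over the domain $T_j$ versus integration by parts) now with the extra weights $\sigma>2$ shows that this region contributes faster than $t^{-3/2}$. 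At the edges the first-order vanishing of $\sin\theta/W(\theta)$ upgrades the non-degenerate stationary-phase contribution from $t^{-1/2}$ to $t^{-3/2}$: writing the amplitude as $\sin\theta\cdot B_{n,k}(\theta)$, the value at the critical point $\theta_\ast(v)$ is of order $v=|k-n|/t$, so the leading term is $O(|k-n|\,t^{-3/2})$, while the subleading terms carry the derivative growth of the Jost functions from Proposition~\ref{Jost-sol}; the weight $\sigma>2$ renders the resulting double sum finite. Collecting the three contributions yields the Hilbert--Schmidt bound $C\,t^{-3/2}$ and hence \eqref{Schr-asn2}.

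For \eqref{Schr-asn1} I would estimate the weighted kernel pointwise, since $\Vert K\Vert_{\bl^1_{3/2}\to\bl^\infty_{-3/2}}=\sup_{n,k}(1+|n|)^{-3/2}(1+|k|)^{-3/2}|[K]_{n,k}|$. Away from $\theta_0$ and the edges, integration by parts (using the $\ell^1_3$ derivative bounds of Proposition~\ref{Jost-sol}) gives decay faster than $t^{-4/3}$, and near the edges the vanishing factor $\sin\theta/W$ again produces additional decay. The remaining contribution is that of the degenerate cubic point, where Lemma~\ref{lem:vC0} with $k=3$ gives the Airy bound $t^{-1/3}$; combining this with the weights, evaluated in the region $|k-n|\approx v_0 t$ where at least one index is of size $\sim t$, supplies the extra power and yields the stated $\mathcal{O}(t^{-4/3})$.

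The main obstacle is the band-edge analysis. In the form \eqref{EDP-rep} the threshold vanishing is manifest through the factor $\sin\theta/W(\theta)$, but there the Jost factor ${\bf h}^-_n$ (resp.\ ${\bf h}^+_k$) has $\theta$-derivatives growing like $|n|$ (resp.\ $|k|$) for the ``wrong'' sign of the index, by Proposition~\ref{Jost-sol}; conversely, the split into $Y^1_{n,k},Y^2_{n,k}$ used in Theorem~\ref{end1} keeps all derivatives bounded but hides the vanishing, since in the same-sign regions $Y^1_{n,k}(0),Y^2_{n,k}(0)\neq 0$ individually and only the combination $T(\theta){\bf h}^+_k\otimes{\bf h}^-_n$ vanishes. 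The delicate point is therefore to exploit the first-order vanishing while controlling the polynomial index growth of the amplitude, which is exactly where the stronger weights and the stronger decay hypothesis $q^{ij}\in\ell^1_3$ are used. This is also what separates the present estimates from the free case: there $W(0)=W(\pi)=0$ cancels the $\sin\theta$, the amplitude no longer vanishes, and only the rate $t^{-1/2}$ of \eqref{D0-as2} survives.
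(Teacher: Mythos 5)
Your sketch correctly identifies the two mechanisms at work (non-resonance makes $T(\theta)\propto\sin\theta/W(\theta)$ vanish at the band edges; the weights suppress the region where $\Phi_v$ has critical points with $|k\mp n|\gtrsim t$), but it is missing the one device that makes the paper's proof close, and the step you yourself flag as ``the main obstacle'' is exactly the step you never resolve. The paper does \emph{not} run a stationary-phase analysis with vanishing amplitude at moving critical points $\theta_*(v)$. Instead, it first writes the jump of the resolvent in the form $\frac{|T(\theta)|^2(m+\lambda)}{-2\I\sin\theta}\bigl[{\bf w}_k^+(\theta)\otimes{\bf w}_n^+(-\theta)+{\bf w}_k^-(\theta)\otimes{\bf w}_n^-(-\theta)\bigr]$ and performs a single \emph{exact} integration by parts in the spectral integral \eqref{sp-rep-sol} over the full period (using $\E^{-\I t\lambda}d\lambda=\tfrac{\I}{t}\,d\,\E^{-\I tg(\theta)}$; periodicity kills all boundary terms). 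This produces the explicit prefactor $t^{-1}$ together with amplitudes obtained by differentiating $\frac{|T|^2}{\sin\theta}(\cdots)$, and non-resonance enters precisely here: $\frac{|T|^2}{\sin\theta}=\frac{4\sin\theta}{|m+\lambda|^2|W|^2}$ and $\frac{T}{\sin\theta}=\frac{2\I}{(m+\lambda)W}$ are $C^1$ up to the edges. Only \emph{after} this differentiation are the scattering relations applied, yielding the amplitudes $Z^{1,2}_{n,k}$ of \eqref{I-estn} with the growth bound \eqref{Z3-est}; then the whole machinery of Theorem~\ref{end1} (van der Corput for the sup bound, the splitting \eqref{Jpm} plus Lemma~\ref{lem:+} with $p=3/2$ for the weighted $\ell^2$ bound) is re-run verbatim, giving $t^{-1}\cdot t^{-1/3}=t^{-4/3}$ and $t^{-1}\cdot t^{-1/2}=t^{-3/2}$. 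This ordering — integrate by parts on the combination that is regular at the edges, \emph{then} split by sign regions — is exactly what dissolves the tension you describe between the manifest vanishing (raw form, growing derivatives) and bounded derivatives (the $Y$-decomposition, hidden vanishing). Your substitute, a stationary-phase expansion uniform in $v\to 0$ with the critical point approaching the zero of the amplitude and with error constants involving Jost derivatives growing like $|k|^3$, is genuinely harder and is asserted rather than carried out.

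There is also a concrete error in your region bookkeeping: the claim that on ${\bf J}$ ``there is no stationary point and repeated integration by parts gives a fast pointwise bound'' is false. The critical points of $\Phi_v$ solve $g'(\theta)=-v$ and sweep through all of $[-\pi,0]$ as $v$ ranges over $(0,v_0)$: for small $v$ they sit near the band edges (inside ${\bf J}$), and for intermediate $v$ they sit in the bulk of ${\bf J}$. On ${\bf J}$ one only has $|\Phi_v''|\ge C(\nu)$, so van der Corput gives $t^{-1/2}$ and no more; without the paper's $t^{-1}$ prefactor this alone cannot reach $t^{-3/2}$, and your sketch never treats the intermediate-$v$ critical points (they would require extending the localization-plus-weights argument of Lemma~\ref{lem:+} to all of ${\bf J}$, not integration by parts). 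A secondary technical point: where you do propose repeated integration by parts on sharply cut subintervals, the boundary terms are $O(t^{-1})$, which already exceeds both target rates; one needs smooth cutoffs or, again, the paper's global integration by parts. So while your outline has the right physics, the proof as sketched does not close: the band-edge step and the ${\bf J}$ step both fail as written, and both are repaired simultaneously by the single integration by parts from \eqref{sp-rep-sol} to \eqref{I-est} that your proposal lacks.
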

\begin{proof}
{\it Step i)} To prove \eqref{Schr-asn1} it suffices to show that
\begin{equation}\label{HP-n1}
|\left[ \E^{-\I t \D}P_c \right]_{n,k}|\le C (1+|n|^{3/2})(1+|k|^{3/2})t^{-4/3},\quad t\ge 1.
\end{equation}
For $n\le k$, we represent the jump of the resolvent across the spectrum  as
 (cf.\ \cite[p13]{EKT})
\[
\cR(\omega+\I 0)- \cR(\omega-\I 0))=\frac{|T(\theta)|^2(m+\lambda)}{-2\I\sin\theta}
[{\bf w}_k^+(\theta)\otimes{\bf w}_n^+(-\theta)+{\bf w}_k^-(\theta)\otimes{\bf w}_n^-(-\theta)].
\]
Inserting this into \eqref{sp-rep-sol} and integrating by parts, we get
\begin{align}\nonumber
&\left[ \E^{-\I t\D}P_c \right]_{n,k}\\
\nonumber
& = \frac{\I(m+\lambda)}{2\pi t}\int_{-\pi}^{\pi}\E^{-\I t g(\theta)}
\frac{d}{d\theta}\Big[\frac{|T(\theta)|^2}{\sin\theta}
({\bf w}_k^+(\theta)\otimes{\bf w}_n^+(-\theta)+{\bf w}_k^-(\theta)\otimes{\bf w}_n^-(-\theta))\Big]d\theta\\
\nonumber
&=[{\mathcal P}^+(t) ]_{n,k}+ [{\mathcal P}^-(t)]_{n,k}.
\end{align}
We consider the first summand only. Evaluating the derivative, we further obtain
\begin{align}\nonumber
\left[{\mathcal P}^+(t)\right]_{n,k}
&=\frac{\I(m+\lambda)}{2\pi t}\int_{-\pi}^{\pi}\E^{-\I t g(\theta)}
\frac{d}{d\theta}\Big[\frac{|T(\theta)|^2}{\sin\theta}\E^{-\I\theta(k-n)}
{\bf h} _k^+(\theta)\otimes{\bf h}_n^+(-\theta)\Big]d\theta\\
\label{I-est}
&=\frac{m+\lambda}{2\pi t}\int_{-\pi}^{\pi}\E^{-\I t \Phi_v(\theta)}
\big((k-n)+\I\frac{d}{d\theta}\big)
\frac{|T(\theta)|^2}{\sin\theta}{\bf h} _k^+(\theta)\otimes{\bf h}_n^+(-\theta),
\end{align}
where $\Phi_v(\theta)$ is defined in \eqref{phi}.
Applying the scattering relation 
\[
T(-\theta){\bf h}_n^+(-\theta)=R^{-}(-\theta){\bf h}_n^-(-\theta)\E^{-2in\theta}+{\bf h}_n^-(\theta),
\]
we get the representation
\begin{equation}\label{I-estn}
\left[{\mathcal P}^+(t) \right]_{n,k}^{+}=
\frac{1}{2\pi t}\int_{-\pi}^{\pi}\E^{-\I t \Phi_v(\theta)}Z_{n,k}^1(\theta)d\theta
+\frac{1}{2\pi t}\int_{-\pi}^{\pi}\E^{-\I t \breve\Phi_v(\theta)}Z_{n,k}^2(\theta)d\theta,
\end{equation}
where $\Phi_v(\theta)$ is the same as before, 
\begin{equation}\label{tphin}
\breve\Phi_v(\theta)=g(\theta)+\breve v\theta,~~{\rm with}~~ \breve v=(n+k)/t,
\end{equation}
and
\begin{align*}
Z_{n,k}^1(\theta)&= \begin{cases}
\big((k-n)+\I\frac{d}{d\theta}\big)
\frac{|T(\theta)|^2}{\sin\theta}{\bf h} _k^+(\theta)\otimes{\bf h}_n^+(-\theta), & 0\leq n\le k,\\[1mm]
\big((k-n)+\I\frac{d}{d\theta}\big)
\frac{T(\theta)}{\sin\theta}{\bf h} _k^+(\theta)\otimes{\bf h}_n^-(\theta),
& n\leq k\leq 0\cup n\le 0\le k,
\end{cases}\\
Z_{n,k}^2(\theta)&= \begin{cases}
0, & 0\leq n\le k,\\[1mm]
\big((k+n)+\I\frac{d}{d\theta}\big)
\frac{T(\theta)}{\sin\theta}R^{-}(-\theta){\bf h} _k^+(\theta)\otimes{\bf h}_n^-(-\theta), & n\leq k\leq 0\cup n\le 0\le k.
\end{cases}
\end{align*}
Since $\frac{T(\theta)}{\sin\theta}=\frac {2\I}{(m+\lambda)W(\theta)}$ then, in the non-resonant case, Proposition~\ref{Jost-sol} (ii) implies
\begin{align}\nonumber
|Z_{n,k}^j(\theta)|+|\frac{d}{d\theta}Z_{n,k}^j(\theta)|&\le C (1+|k|),\quad 0\le n\le k,\\
\nonumber
|Z_{n,k}^j(\theta)|+|\frac{d}{d\theta}Z_{n,k}^j(\theta)|
&\le C (1+\max\{|k|, |n|\}),\quad n\le 0\le k,\\
\nonumber
|Z_{n,k}^j(\theta)|+|\frac{d}{d\theta}Z_{n,k}^j(\theta)|&\le C \big((1+|n|)(1+|k|^2)+(1+|k|^3)\big)\\
\label{Z3-est}
&\le C(1+|n|^{3/2})(1+|k|^{3/2}),\quad n\le k\le 0,
\end{align}
and we obtain \eqref{HP-n1} by the method of Theorem~\ref{end1} (i) .
\smallskip\\
{\it Step ii)}.
To prove \eqref{Schr-asn2},  we represent ${\mathcal P}^+$ as the sum
\begin{equation}\label{MMM}
{\mathcal P}^+(t) ={\bf K}^{\pm}(t)+\breve{{\bf K}}^{\pm}(t)+{\bf K}(t),
\end{equation}
where
\begin{align*}
[{\bf K}^\pm(t)]_{n,k}&= \frac{m+\lambda}{2\pi t}\int_{{\bf J}_{\pm}}
\E^{-\I t\Phi_v(\theta)} Z_{n,k}^1(\theta) d\theta,\\
[\breve{\bf K}^\pm(t)]_{n,k}&= \frac{m+\lambda}{2\pi t}\int_{{\bf J}_{\pm}}
 \E^{-\I t\breve\Phi_v(\theta)}  Z_{n,k}^2(\theta) d\theta,\\
[{\bf K}(t)]_{n,k}&=\frac{m+\lambda}{2\pi t}\int_{\bf J}
\Big(\E^{-\I t\Phi_v(\theta)}Z_{n,k}^1(\theta) +
\E^{-\I t\breve\Phi_v(\theta)} Z_{n,k}^2(\theta)\Big) d\theta,
\end{align*}
where ${\bf J}_{\pm}$ and $\bf J$ are defined in \eqref{Jpm}.
By virtue of \eqref{Z3-est}, the van der Corput Lemma~\ref{lem:vC0} and integration by parts, we obtain
\begin{align*}
|[{\bf K}(t)]_{n,k}|&\le C t^{-3/2}(1+|n|^{3/2})(1 + |k|^{3/2}),\\
|[{\bf K}^-(t)]_{n,k}|&\le C t^{-2}(1+|n|^{3/2})(1 + |k|^{3/2}),\quad t\ge 1,
\end{align*}
respectively.
Then
\[
\Vert {\bf K}(t)\Vert_{\bl^2_\sigma\to\bl^2_{-\sigma}}\le C t^{-3/2},\quad
\Vert {\bf K}^-(t)\Vert_{\bl^2_\sigma\to\bl^2_{-\sigma}}\le C t^{-2},\quad\sigma>2,
\quad t\ge 1.
\]
To  estimate  ${\bf K}^+(t)$,  we apply \eqref{Z3-est} together with  Lemma \ref{lem:+}  with $p=3/2$
and obtain
\[
\Vert {\bf K}^+(t)\Vert_{\bl^2_\sigma\to\bl^2_{-\sigma}}\le C t^{-3/2},\quad\sigma>2,
\quad t\ge 1.
\]
It remains to estimate $\breve{\bf K}^\pm(t)$.  We split $\breve{\bf K}^\pm(t)$ according to
\[
\breve{\bf K}^+(t)=\breve{\bf K}^+_+(t)+\breve{\bf K}^+_-(t),\quad
\breve{\bf K}^-(t)=\breve{\bf K}^-_+(t)+\breve{\bf K}^-_-(t),
\]
where the kernels of the corresponding operators are of the form
\[
[\breve{\bf K}^{\pm}_+(t)]_{n,k}= \begin{cases}
[\breve{\bf K}^{\pm}(t)]_{n,k}, &n+k\ge 0,\\
0, & n+k< 0,
\end{cases}\quad
[\breve{\bf K}^{\pm}_-(t)]_{n,k}= \begin{cases}
0, & n+k\ge0, \\
[\breve{\bf K}^{\pm}(t)]_{n,k},  & n+k<0.
\end{cases}
\]
The operators $\breve{\bf K}^\pm_+(t)$ can be estimated similarly to the operators 
${\bf K}^\pm(t)$. To estimate $\breve{\bf K}^{\pm}_-(t)$ one needs to interchange the 
method for $''+''$ and $''-''$ cases.
\end{proof}

\noindent{\bf Acknowledgments.}
We thank the anonymous referee for the careful reading of our manuscript leading to several
improvements.


\end{document}